	\let\oldleft\left
	\let\oldright\right
	\renewcommand{\left}{\mathopen{}\mathclose\bgroup\oldleft}
	\renewcommand{\right}{\aftergroup\egroup\oldright}
	\theoremstyle{definition}
	\newtheorem{Definition}[equation]{Definition}
	\newtheorem{Remark}[equation]{Remark}
	\newtheorem{Claim}[equation]{Claim}
	\theoremstyle{plain}
	\newtheorem{Theorem}[equation]{Theorem}
	\newtheorem{Lemma}[equation]{Lemma}
	\newtheorem{Corollary}[equation]{Corollary}
	\let\op\operatorname
	\renewcommand{\epsilon}{\varepsilon}
	\renewcommand{\hat}{\widehat}
	\newcommand{\moo}[1]{ {\widetilde #1 } }
	\newcommand{\Z}{{\mathbb Z}}
	\newcommand{\R}{{\mathbb R}}
	\newcommand{\BigO}{{O}}
	\newcommand{\GL}{\operatorname{GL}}
	\newcommand{\SL}{\operatorname{SL}}
	\renewcommand{\O}{\operatorname{O}}
	\newcommand{\Eloc}[1]{\mathbb E_{0}\left[#1\right]} % Skriv E[...] % Väntevärde!
	\newcommand{\mat}[1]{\begin{pmatrix}#1\end{pmatrix}}
	\newcommand{\e}{\varepsilon}
	\DeclareMathOperator{\vol}{vol}
	\newcommand{\defeq}{\mathrel{\mathop:}=} % :=
	\newcommand{\eqdef}{=\mathrel{\mathop:}} % =:
	\newcommand{\inv}{^{-1}}
	\newcommand{\T}{^{\mathsf T}} % Matrix transpose
	\newcommand{\supp}{\operatorname{supp}}
	\newcommand{\abs}[1]{\left|#1\right|}
	\newcommand{\norm}[1]{\left\|#1\right\|}
	\newcommand{\dfn}[1]{\textbf{#1}}%\index{#1}}
	\newcommand{\dd}[1]{\mathop{d#1}}
	\newcommand{\ddx}[1]{\frac{\dd{}}{\dd #1}}
	\newcommand{\pypx}[2]{\frac{\partial #1}{\partial #2}}
	\newcommand{\ppx}[1]{\frac{\partial}{\partial #1}}
	\newcommand{\dmu}[2]{\mathop{d#1}(#2)}
	\newcommand{\p}[1]{\section{ #1 } }%{\paragraph*{#1.}}
\begin{document}
	\title{The number of points from a random lattice that lie inside a ball}
	\author{Samuel Holmin \thanks{The author was partially supported by the Swedish Research Council.} }
	% \address{Department of Mathematics, 
	%          KTH Royal Institute of Technology, 
	%          SE-100 44 Stockholm, Sweden}
	%\email{holmin@kth.se}
	
	\date{\today}

	\maketitle

	\begin{abstract}
		We prove a sharp bound for the remainder term of the number of lattice points inside a ball, when averaging over a compact set of (not necessarily unimodular) lattices, in dimensions two and three. We also prove that such a bound cannot hold if one averages over the space of all lattices.	
	\end{abstract}

	%\tableofcontents

\let\oldll\ll \renewcommand{\ll}{\oldll} % asymptotically less
\renewcommand{\lll}{\lessapprox} % asymtptotically less with a small extra power
\newcommand{\ESL}[1]{\mathbb E_{1}\left[#1\right]} % Skriv E[...] % Väntevärde!

\p {Introduction}
Let $\Omega$ be the (closed) standard unit ball in $\R^n$. A \dfn{lattice} in $\R^n$ is a set of the form $X\cdot\Z^n\subseteq \R^n$ for some $X\in\GL_n(\R)$. The set of all lattices may be identified with the space $\GL_n(\R)/\GL_n(\Z)$, and we equip it with a measure $\mu$ induced by the Haar measure on $\GL_n(\R)$. Let $N_X(t)$ be the number of points from the lattice $X\Z^n$ inside the ball $t\Omega$ of radius $t$. We have $N_X(t)=\#(X\Z^n\cap t\Omega)=\#(\Z^n\cap t\Omega_X)$, where $\Omega_X\defeq X\inv \Omega$.
Let $E_X(t)\defeq N_X(t)-\vol(t\Omega_X)$.
Consider the set of unit cubes centered at the set of integer points $u\in\Z^n$.
Since $N_X(t)$ equals the number of cubes whose center is inside $t\Omega_X$, which coincides with the volume of the union of these cubes, we can write
\begin{gather}
	N_X(t)=\vol(t\Omega_X)+\sum_{\text{cubes $T$ intersecting }\partial(t\Omega)}Y_T,
\end{gather}
where $Y_T$ equals $\vol(T\setminus t\Omega_X)$ if the center of $T$ is inside $t\Omega$, and $Y_T$ equals $-\vol(T\cap t\Omega_X)$ otherwise. There are approximately $\vol(\partial(t\Omega_X))=t^{n-1}\vol(\partial(\Omega_X))$ correction terms $Y_T$, each bounded, so it follows that $N_X(t)$ is asymptotic to $t^n\vol(\Omega_X)$. Heuristically, if the correction terms $Y_T$ were i.i.d.\ random variables, the central limit theorem would imply that the standard deviation of the remainder term $E_X(t)=\sum_T Y_T$ is approximately proportional to $\sqrt{\vol(\partial(t\Omega_X))}$ for large $t$. This suggests that $\abs{E_X(t)}$ should be of the order $t^{(n-1)/2}$ for fixed $X$.

Let $\delta>0$ be a small arbitrary constant. For the integer lattice $\Z^2$, Hardy conjectured that $\abs{E_{\Z^2}(t)}=\BigO( \sqrt{\vol(\partial(t\Omega))} \cdot t^\delta )=\BigO(t^{1/2+\delta})$ as $t\to\infty$ \cite{hardycareful}.
It is known that $\abs{E_X(t)} \neq \BigO(t^{1/2})$ for every lattice in $\R^2$, due to Nowak \cite{nowak126}, and the best known upper bound is $\abs{E_{X}(t)}=\BigO(t^{131/208+\delta}),$ where $131/208 \approx 0.62981$, due to Huxley \cite{huxley3}. Hardy's conjecture holds on average in the sense that $\sqrt{\frac1t\int_0^{t}|E_{X}(\tau)|^2\dd \tau}=\Theta(t^{1/2})$, due to Bleher \cite{bleher}.

%Let $\delta>0$ be a small arbitrary constant. For the integer lattice $\Z^2$, it was conjectured by Hardy that $|E_{\Z^2}(t)|=\BigO( \sqrt{\vol(\partial(t\Omega))} \cdot t^\delta )=\BigO(t^{1/2+\delta})$ as $t\to\infty$. Hardy proved that $|E_{\Z^2}(t)|\neq \BigO(t^{1/2})$ \cite{hardy}. The best known result to date is $|E_{\Z^2}(t)|=\BigO(t^{131/208+\delta}),$ where $131/208 \approx 0.62981$, due to Huxley \cite{huxley}. Hardy's conjecture holds on average in the sense that $\sqrt{\frac1t\int_0^{t}|E_{\Z^2}(\tau)|^2\dd \tau}=\Theta(t^{1/2})$ \cite{cramer}.

In three dimensions, it is known that $\abs{E_X(t)}\neq O(t)$, due to Nowak \cite{nowak129}, and the best known upper bound for arbitrary lattices in $\R^3$ is $\abs{E_{X}(t)}= O(t^{63/43+\delta})$, where $63/43\approx 1.465$, due to Müller \cite{muller122}, with the improvement $\abs{E_{\Z^3}(t)}= O(t^{21/16+\delta})$ for the integer lattice $\Z^3$, where $21/16=1.3125$, due to Heath-Brown \cite{sphereproblem}. On average, we have $\sqrt{\frac1t\int_t^{2t}|E_{X}(\tau)|^2\dd \tau}=\BigO(t^{1+\delta})$, see \cite{iosevich}.
%\Theta(t(\log t)^{1/2})$, due to Jarnik [67], which suggests that the correct exponent should be $1$.

% For $n\geq 4$, it is known that $E_{\Z^n}(t)=\BigO(t^{n-2+\delta})$ and $E_{\Z^n}(t)\neq \BigO(t^{n-2})$, due to Krätzel [89].

%One may try to generalize Hardy's conjecture to arbitrary lattices in $n$ dimensions, that is, to conjecture that $|E_X(t)|=\BigO( \sqrt{\vol(\partial(t\Omega))}\cdot  t^\delta )=\BigO(t^{(n-1)/2+\delta})$ for almost all $X$.\comment{lägg till ``for almost all $X$'', eller nåt i den stilen. jag vet att det är falskt för vissa $X$ i högre dimensioner}
The main result of this paper is that the bound $\BigO(t^{(n-1)/2+\delta})$ holds on average in dimensions two and three, when averaging over any compact set of lattices:

\begin{Theorem} \label{maintheorem} \label{bam}
	Let $n=2$ or $n=3$. Fix a compact subset $L_0$ of $\GL_n(\R)/\GL_n(\Z)$. Then there exists an integer $m>0$ such that
	\begin{gather}
		\sqrt{\Eloc{\abs{E_X(t)}^2}} = \BigO(t^{(n-1)/2}(\log t)^m)
	\end{gather}
	as $t\to\infty$, where $\Eloc{f(X)}\defeq\int_{L_0} f(X)\dmu\mu X$ is the mean of $f$ over $L_0$. 
\end{Theorem}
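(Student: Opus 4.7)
The plan is to use smoothed Poisson summation to express $E_X(t)$ as an oscillatory sum over dual frequencies and then exploit cancellation from averaging over $X\in L_0$, so that only a diagonal contribution of size $t^{n-1}$ survives up to logarithmic losses. Let $\psi_\epsilon\in C_c^\infty(\R^n)$ be a nonnegative bump of total mass $1$ supported in a ball of radius $\epsilon$, and sandwich $\mathbf{1}_{t\Omega_X}$ between $\mathbf{1}_{(t\pm\epsilon)\Omega_X}\ast\psi_\epsilon$. Poisson summation applied to these smooth cutoffs produces
\[
\sum_{u\in\Z^n}\bigl(\mathbf{1}_{(t\pm\epsilon)\Omega_X}\ast\psi_\epsilon\bigr)(u) = \vol\bigl((t\pm\epsilon)\Omega_X\bigr) + \sum_{\xi\neq 0}\hat\chi_{(t\pm\epsilon)\Omega_X}(\xi)\,\hat\psi_\epsilon(\xi),
\]
with a bulk smoothing error of size $O(\epsilon t^{n-1})$ uniformly in $X\in L_0$. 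Writing $\Omega_X=X\inv\Omega$ gives $\hat\chi_{t\Omega_X}(\xi)=t^n|\det X|\inv\hat\chi_\Omega(t(X^T)\inv\xi)$, and the classical Bessel-function asymptotic
\[
\hat\chi_\Omega(\eta)=c_n|\eta|^{-(n+1)/2}\cos(2\pi|\eta|-\phi_n)+O(|\eta|^{-(n+3)/2})
\]
supplies both pointwise decay and an explicit oscillating phase in $X$; note that compactness of $L_0$ is what keeps $|(X^T)\inv|$ uniformly bounded, so that these asymptotics are usable.

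Next, squaring and integrating over $X$ gives
\[
\Eloc{\abs{E_X(t)}^2}=\sum_{\xi_1,\xi_2\neq 0}\hat\psi_\epsilon(\xi_1)\overline{\hat\psi_\epsilon(\xi_2)}\int_{L_0}\hat\chi_{t\Omega_X}(\xi_1)\overline{\hat\chi_{t\Omega_X}(\xi_2)}\,d\mu X+O(\epsilon^2 t^{2(n-1)}),
\]
which I would split into a diagonal ($\xi_2=\pm\xi_1$) piece and an off-diagonal piece. On the diagonal the phase in $X$ does not oscillate and the uniform Bessel bound gives each term size $\ll t^{n-1}|\xi_1|^{-(n+1)}$; since $\sum_{\xi\neq 0}|\xi|^{-(n+1)}<\infty$ for $n\geq 2$, the diagonal contributes the target $O(t^{n-1})$. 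For $\xi_2\neq\pm\xi_1$ the phase $\exp(\pm 2\pi it(|(X^T)\inv\xi_1|\pm|(X^T)\inv\xi_2|))$ has non-vanishing gradient in $X$ away from a small set, so repeated integration by parts in the matrix variable produces extra polynomial decay in $t$ for each fixed pair.

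The principal technical obstacle is to make the off-diagonal cancellation quantitative and uniform. One must partition $L_0$ according to how close $|(X^T)\inv\xi_1|$ is to $|(X^T)\inv\xi_2|$, treat the near-resonant pairs carefully, and track how the number of integration-by-parts steps and the size of the lower-order terms depend on $|\xi_1|$ and $|\xi_2|$; one also needs to handle boundary contributions coming from $\partial L_0$. These are precisely the places where the logarithmic factor $(\log t)^m$ is expected to accumulate, through the sum over the dyadic ranges in $|\xi_1|,|\xi_2|$ where the oscillation gain just barely beats the diagonal bound. Finally, the smoothing scale $\epsilon$ is optimised so that the smoothing error and the tail of the Poisson sum (cut off at $|\xi|\ll\epsilon\inv$ by rapid decay of $\hat\psi_\epsilon$) are both dominated by $t^{(n-1)/2}(\log t)^m$, completing the argument in dimensions $n=2,3$.
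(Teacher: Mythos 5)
Your outline captures the paper's skeleton --- mollified Poisson summation, the Bessel asymptotic for $\hat\chi_\Omega$, a diagonal/off-diagonal split, and a stationary-phase gain in the lattice parameter --- and this is indeed the same skeleton. But what you flag as ``the principal technical obstacle'' is exactly where the proof lives, and the mechanism you propose for it is not right. ``Repeated integration by parts in the matrix variable'' will not give you extra powers of $t$ for each fixed off-diagonal pair: the gradient of $X\mapsto |(X\inv)\T\xi_1|\pm|(X\inv)\T\xi_2|$ is not uniformly bounded below on $L_0$ (for nearly proportional pairs it is small over large portions of $L_0$), and what one actually obtains from a van der Corput step is a single $t^{-1}$ multiplied by a loss that quantifies the degeneracy of the pair. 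The paper's Lemma~\ref{hessianlemma} gives exactly this, with loss $\abs{\tilde k_1^2\tilde l_2^2-\tilde k_2^2\tilde l_1^2}^{-1}$, and to get a bound of this clean shape it does not integrate by parts in an arbitrary chart on $L_0$: it passes to Iwasawa coordinates $X=KAN$ with the specific parametrization $A=\op{diag}(a_i^{-1/2})$, which makes the radicands linear in the $a_i$ so that the phase becomes $\pm\sqrt{\sum_i a_i\tilde k_i^2}\pm\sqrt{\sum_i a_i\tilde l_i^2}$; a two-variable substitution then reduces the inner integral to one with phase $\pm\sqrt x\pm\sqrt y$ and an explicit Jacobian. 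Without a linearization of this kind, the $(\xi_1,\xi_2)$-dependence of your integration-by-parts constants is not tractable enough to sum.

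The second missing ingredient, and the one that costs most of the paper's work, is the sum over near-resonant pairs. After the stationary-phase bound one faces sums of the form $\sum \abs{k_1l_2-k_2l_1}^{-1}\abs{k_1l_2+k_2l_1+2\gamma k_1l_1}^{-1}$ integrated over $\gamma$, with the degenerate cases $k_1l_2-k_2l_1=0$ and $\abs{k_1l_2+k_2l_1+2\gamma k_1l_1}<1/2$ treated separately; these are controlled by delicate elementary lattice-point counting on lines (Lemma~\ref{sum1and2} and the concluding calculation of Section~\ref{sectionofmainproof}), together with a preliminary removal of vectors with vanishing first or second coordinates (Lemmas~\ref{k1lemma}--\ref{k2lemma}) that your sketch does not anticipate. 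This is genuine arithmetic, not dyadic bookkeeping, and ``partition $L_0$ by how close $|(X\inv)\T\xi_1|$ is to $|(X\inv)\T\xi_2|$'' does not address it. One further point: the near-diagonal set is not $\xi_2=\pm\xi_1$ but the set of pairs for which one of the four sign-choices of the phase is nearly stationary over $L_0$; the paper accordingly expands the product of cosines into four exponentials and bounds each sign combination separately, and the diagonal contribution $\BigO(t^{n-1})$ you compute is only the easiest piece of that.
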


This bound is sharp in the sense that $\abs{E_X(t)}\neq o(t^{(n-1)/2})$ for every lattice in $n\geq 3$ dimensions (this result is due to Landau \cite{landau}).
It is not known for any $n\geq 2$ if there exists for each $\delta>0$ some $X$ such that $\abs{E_X(t)}=\BigO(t^{(n-1)/2+\delta})$, but Schmidt proved in \cite{schmidt} that $\abs{E_X(t)} = \BigO(t^{n/2+\delta})$ for almost every lattice, when $n\geq 2$. The best general bound for $n\geq 5$ is $\abs{E_X(t)}=\BigO(t^{n-2})$, due to Götze \cite{goatse}, and this bound is attained by the integer lattices (to be specific, $\abs{E_{\Z^n}(t)}\neq o(t^{n-2})$ for every $n\geq 4$, see Krätzel \cite{kratzel}). See \cite{survey} for an excellent survey on results about lattice points in convex domains.

The assumption in Theorem \ref{bam} that $L_0$ is compact cannot be removed when $n=3$: as Corollary \ref{altcorollary} below shows, if we average over the set $L_{a,b}=\{X\in \GL_3(\R)/\GL_3(\Z): 0<a\leq \abs{\det X}\leq b<\infty \}$, which is not compact, then we get both a lower and an upper bound with an exponent strictly larger than what Theorem \ref{bam} guarantees. The failure of the heuristic in this case may be explained by the fact that $L_{a,b}$ contains lattices with arbitrarily short lattice vectors.

\begin{Theorem} \label{alttheorem}
	For any fixed $n\geq 3$, we have
	\begin{gather}
		\sqrt{\ESL{\abs{E_X(t)}^2}} = \Theta( \sqrt{\vol(t\Omega)} ) = \Theta(t^{n/2})
	\end{gather}
	as $t\to\infty$, where $\ESL{f(X)}\defeq \int_{\SL_n(\R)/\SL_n(\Z)}f(X)\dmu{\mu_1} X$ is the mean of $f$ over the set of all lattices in $\SL_n(\R)/\SL_n(\Z)$, and where $\mu_1$ is the normalized Haar measure on $\SL_n(\R)$.
\end{Theorem}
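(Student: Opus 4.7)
The plan is to compute $\ESL{\abs{E_X(t)}^2}$ essentially exactly, using the Siegel and Rogers mean value formulas on $\SL_n(\R)/\SL_n(\Z)$. Writing $f_t\defeq \mathbf{1}_{t\Omega}$ and
\[
	\hat f_t(X)\defeq \sum_{v\in X\Z^n\setminus\{0\}} f_t(v) = N_X(t)-1,
\]
and noting that $\vol(t\Omega_X) = t^n \vol(\Omega)$ is independent of $X$ when $X\in\SL_n(\R)$, Siegel's mean value formula gives $\ESL{\hat f_t(X)} = \int_{\R^n} f_t = \vol(t\Omega)$ exactly. In particular $\ESL{E_X(t)}=1$, and it suffices to prove that $\mathrm{Var}(\hat f_t) = \Theta(t^n)$.

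For the variance I would expand $\hat f_t(X)^2 = \sum_{v,w\in X\Z^n\setminus\{0\}} f_t(v) f_t(w)$ and split the pairs $(v,w)$ according to whether they are linearly independent or dependent over $\Q$. Rogers' mean value formula for pairs of lattice vectors (valid precisely when $n\geq 3$) shows that the linearly independent pairs contribute expectation $\bigl(\int f_t\bigr)^2 = \vol(t\Omega)^2$. Linearly dependent pairs may be parametrised as $(v,w)=(mv_0,kv_0)$ with $v_0\in X\Z^n$ primitive and $m,k\in\Z\setminus\{0\}$, so the primitive-vector mean value formula yields
\[
	\ESL{\sum_{(v,w)\ \text{lin.\ dep.}} f_t(v)f_t(w)} = \frac{1}{\zeta(n)} \sum_{m,k\in\Z\setminus\{0\}} \int_{\R^n} f_t(mx) f_t(kx)\,dx.
\]
Each inner integral equals $\vol(\Omega)\bigl(t/\max(\abs m,\abs k)\bigr)^n$, and the series $\sum_{m,k\neq 0} \max(\abs m,\abs k)^{-n}$ converges to a strictly positive constant precisely because $n\geq 3$: there are $O(M)$ pairs with $\max(\abs m,\abs k)=M$, contributing a tail $\sum_M M^{1-n}<\infty$. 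Collecting these into a positive constant $C_n$ gives $\ESL{\hat f_t(X)^2} = \vol(t\Omega)^2 + C_n t^n$, so $\mathrm{Var}(\hat f_t) = C_n t^n$. Writing $E_X(t) = (\hat f_t(X) - \ESL{\hat f_t}) + 1$ and expanding the square, $\ESL{\abs{E_X(t)}^2} = C_n t^n + 1 = \Theta(t^n)$, and the square root is $\Theta(t^{n/2}) = \Theta(\sqrt{\vol(t\Omega)})$.

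The main obstacle is not conceptual but bookkeeping: one has to invoke the correct form of Rogers' second moment formula and verify its applicability to the bounded, compactly supported indicator $\mathbf{1}_{t\Omega}$, and track the strict positivity of the constant $C_n$ in order to get the lower bound as well as the upper bound. The hypothesis $n\geq 3$ is essential in this approach since the relevant series $\sum_{m,k\neq 0}\max(\abs m,\abs k)^{-n}$ diverges for $n=2$; this reflects the fact that in two dimensions the cusp of $\SL_2(\R)/\SL_2(\Z)$ contributes more delicately and Rogers' second moment formula takes a different form.
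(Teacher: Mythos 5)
Your proof is correct and follows the same overall strategy as the paper: compute $\ESL{N_X(t)}$ and $\ESL{N_X(t)^2}$ via the Siegel and Rogers mean value formulas, and subtract. Where the paper quotes the full form of Rogers's second-moment formula and plugs in $\rho(u,v)=\chi_{t\Omega}(u)\chi_{t\Omega}(v)$, you effectively re-derive it by splitting the pair sum into linearly independent pairs (handled by Rogers's independent-tuple formula, valid since $m=2<n$) and linearly dependent pairs (handled by the primitive-vector mean value formula); this is how Rogers's formula is obtained in the first place, so the two routes are substantively the same. Your unpacking does make the hypothesis $n\geq 3$ more transparent—it is needed both for the independent-pair formula and for the convergence of $\sum_{m,k\neq 0}\max(\abs m,\abs k)^{-n}$—which the paper glosses over (indeed the paper's displayed series happens to converge already for $n\geq 2$). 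One small bookkeeping slip in your sketch: the parametrization $(v,w)=(mv_0,kv_0)$ with $v_0$ primitive is $2$-to-$1$ (replacing $v_0$ by $-v_0$ and $(m,k)$ by $(-m,-k)$ gives the same pair), so a factor of $\tfrac12$ is missing; this is a constant-factor issue and does not affect the $\Theta(t^n)$ conclusion. For the lower bound, note that the strict positivity of $C_n$ is already guaranteed by the diagonal terms $v=w$, whose expected contribution is $\vol(t\Omega)$.
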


\newcommand{\Eab}[1]{\mathbb E_{a,b}\left[#1\right]}

\begin{Corollary} \label{altcorollary}
	Fix $0<a<b$. For any fixed $n\geq 3$, we have
	\begin{gather}
		\sqrt{\Eab{\abs{E_X(t)}^2}} = \Theta(t^{n/2})
	\end{gather}
	as $t\to\infty$, where $\Eab{f(X)}\defeq\int_{L_{a,b}} f(X)\dmu\mu X$ is the mean of $f$ over $L_{a,b}=\{X\in \GL_n(\R)/\GL_n(\Z): a\leq \abs{\det X}\leq b\}$.\footnote{Note that averaging over the whole set $\GL_n(\R)/\GL_n(\Z)$ does not make sense, since $\GL_n(\R)/\GL_n(\Z)$ has infinite covolume and consequently the expected value of any constant would be infinite.}
\end{Corollary}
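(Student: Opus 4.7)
The plan is to deduce Corollary~\ref{altcorollary} from Theorem~\ref{alttheorem} via a scaling reduction. Every $X\in\GL_n(\R)$ factors as $X=sY$ with $s=\abs{\det X}^{1/n}>0$ and $Y$ of unit absolute determinant. Scaling the lattice $Y\Z^n$ by $s$ is the same as rescaling the target ball by $s\inv$: indeed $N_{sY}(t)=\#(Y\Z^n\cap(t/s)\Omega)=N_Y(t/s)$ and $\vol(t\Omega_{sY})=\vol((t/s)\Omega_Y)$, and therefore
\begin{gather*}
	E_{sY}(t)=E_Y(t/s).
\end{gather*}

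Under this parametrization, the Haar measure $\mu$ on $\GL_n(\R)/\GL_n(\Z)$ factors as $\rho(s)\dd s\cdot\dmu{\mu_1}Y$ for some positive continuous density $\rho$ on $(0,\infty)$ (the sign component of $\GL_n$ contributes only a bounded overall constant). The constraint $\abs{\det X}\in[a,b]$ translates to $s\in[a^{1/n},b^{1/n}]$, a fixed compact subinterval of $(0,\infty)$ on which $\rho$ is bounded between two positive constants. Substituting into the definition of $\Eab{\cdot}$ and applying Theorem~\ref{alttheorem} to the inner integral yields
\begin{gather*}
	\Eab{\abs{E_X(t)}^2}\asymp\int_{a^{1/n}}^{b^{1/n}}\ESL{\abs{E_Y(t/s)}^2}\dd s\asymp\int_{a^{1/n}}^{b^{1/n}}(t/s)^n\dd s\asymp t^n,
\end{gather*}
where the last step uses that $t/s\asymp t$ uniformly in $s\in[a^{1/n},b^{1/n}]$ as $t\to\infty$. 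Taking square roots gives $\sqrt{\Eab{\abs{E_X(t)}^2}}=\Theta(t^{n/2})$, as desired.

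There is essentially no real obstacle here; the corollary is a routine consequence of Theorem~\ref{alttheorem}, relying only on the scaling identity $E_{sY}(t)=E_Y(t/s)$ and the standard product decomposition of Haar measure on $\GL_n(\R)$ into a scalar and a unimodular factor. The hypothesis $0<a<b<\infty$ enters precisely to keep $s$ bounded away from both $0$ and $\infty$, so that the factor $(t/s)^n$ stays comparable to $t^n$ uniformly in $s$; allowing $a\to 0$ or $b\to\infty$ would either change the exponent or make the integral diverge.
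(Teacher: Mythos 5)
Your argument is correct and mirrors the paper's own proof: both decompose Haar measure on $\GL_n(\R)/\GL_n(\Z)$ into a scalar dilation factor times the unimodular part $\SL_n(\R)/\SL_n(\Z)$, use the scaling identity $E_{sY}(t)=E_Y(t/s)$, apply Theorem \ref{alttheorem} to the inner integral, and bound the compactly supported outer integral trivially. The only cosmetic difference is that the paper pins down the radial measure explicitly as $dr/r$, whereas you only invoke that it is a positive continuous density on the compact interval $[a^{1/n},b^{1/n}]$, which suffices.
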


The corresponding statement of Theorem \ref{maintheorem} for orthogonal lattices (that is, lattices $X\Z^n$ where $X$ is a diagonal matrix) was proved by Hofmann, Iosevich, Weidinger in \cite{ellipsoids}, and our proof of Theorem \ref{bam} is inspired by theirs.

This paper is organized as follows. Sections \ref{decompsection} through section \ref{sectionofmainproof} are dedicated to the proof of Theorem \ref{bam} for $n=3$. We sketch in section \ref{sectionmain2} how the given proof may be modified for the slightly easier case $n=2$. Theorem \ref{alttheorem} is an easy consequence of the mean value formulas of Siegel and Rogers; we prove Theorem \ref{alttheorem} and Corollary \ref{altcorollary} in section \ref{proofoversl}.

\begin{Remark}
	The actual measure used in Theorem \ref{maintheorem} is not important; the proof holds for any measure of the form $f(X)\dd X$ and any compact set $L_0$ of $\GL_n(\R)$, where $\dd X$ is the Euclidean measure on the entries of the matrix $X$ and $f:\GL_n(\R)\to \R^+$ is a function which is bounded above and below in $\R^+=\{x\in\R: x>0\}$ throughout $L_0$.

	For instance, one may use the following natural measure for generating random lattices close to a given lattice.
	Fix a matrix $X_0\in\GL_n(\R)$.
	We generate random vectors $x_1,\ldots,x_n$, where each vector $x_i$ is generated by a uniform probability measure on vectors sufficiently close to the $i$th column of $X_0$, and then we let $x_1,\ldots,x_n$ be the basis vectors of our random lattice. This corresponds to taking $f(X)=1$ for all $X$ and taking $L_0\defeq \{X_0+tE: \abs t\leq \e\}$, where $E$ is the $n\times n$-matrix of all ones, and $\e>0$ is sufficiently small such that $L_0$ does not contain any singular matrices. %\subseteq \GL_3(\Z)$.
\end{Remark}

% \begin{Remark}
% 	We wish to address in a future paper the question of whether Theorem \ref{bam} holds when averaging instead over a compact subset of $\SL_3(\R)/\SL_3(\Z)$. We remark that our proof of Theorem \ref{bam} holds when averaging over any compact subset of the group of matrices $\{\mat{x&y&z\\0&v&w\\0&0&1}: x,v>0, y,z,w\in\R \}$.
% \end{Remark}

\newcommand{\BOUND}{{\mathcal U(t)}}
\newcommand{\somevectors}{ \Z^3(\BOUND) } % this is where my k,l are throughout.

\p {Notation}
Throughout this paper, we will assume that the parameter $t>1$ is large.
We will write $f(t)\lll g(t)$ if there exists a constant $c>0$ and an integer $m\geq 0$ such that $\abs{f(t)}\leq \abs{c g(t) (\log t)^m}$ for all sufficiently large $t$. We see that $\lll$ is a transitive relation. As customary, we will write $f(t)\ll g(t)$ if there exists a constant $c$ such that $\abs{f(t)}\leq \abs{c g(t)}$ for all sufficiently large $t$.

Given a function $f:\R^k\to\R$ for some $k$, we write $\hat f(\xi)=\int_{\R^k}f(x)e^{-2\pi i x\cdot \xi}\dd x$ for its Fourier transform.

We will write $\Z^n(a)$ for the set of all nonzero integer vectors $k=(k_1,\ldots,k_n)$ such that $\abs{k_i}\leq a$ for each $1\leq i\leq n$. For a vector $k$ and a matrix $X$, we will write $\norm k_X\defeq \norm{(X\inv)\T k}$. Finally, we will frequently use the notation $\moo k\defeq (N\inv)\T k$ where $N$ is a given upper triangular matrix which will be clear from context.

%\pagebreak % This pagebreak makes the spacing much nicer far ahead in the document.

\p {Decomposition of the Haar measure} \label{decompsection}
Let $\mu$ be the Haar measure on $\GL_3(\R)$. The measure $\mu$ induces a measure on the quotient space $\GL_3(\R)/\GL_3(\Z)$, and we will abuse notation by denoting both of these measures by the symbol $\mu$. Let $\mathcal F\subseteq \GL_3(\R)$ be a fundamental domain relative to $\GL_3(\Z)$. If $f:\GL_3(\R)/\GL_3(\Z)\to\R$ is an integrable function, we shall write $f(X)\defeq f(X\cdot\GL_3(\Z))$ for $X\in\GL_3(\R)$, and then
\[\int_{\GL_3(\R)/\GL_3(\Z)}f(X)\dmu\mu X=\int_{\mathcal F\subseteq \GL_3(\R)}f(X)\dmu\mu X,\]
where in the right-hand side we are integrating with respect to the measure on $\GL_3(\R)$.

We will use the Iwasawa decomposition $\GL_3(\R)=\mathcal{K\cdot A\cdot N}$ where $\mathcal K=\O_3(\R)$ is the group of orthogonal matrices, $\mathcal A$ is the group of diagonal matrices with positive diagonal entries, and $\mathcal N$ is the group of upper triangular matrices with ones on the diagonal. If $X\in \GL_3(\R)$, then there is a unique $(K,A,N)\in\mathcal{K\times A\times N}$ such that $X=KAN$. Let $\mathcal N^+$ be the set of all matrices $N\in \mathcal N$ such that all entries of $N$  above the diagonal belong to the interval $[1,2)$. (We will later use the fact that the entries of $N\in\mathcal N^+$ are not close to zero.) By performing Euclid's algorithm on the columns of $N$ using elementary column operations, one can show that there exists for any $X=KAN$ some matrix $U\in\GL_3(\Z)$ such that $XU\in\mathcal K\cdot\mathcal A\cdot\mathcal N^+$, which shows that the set 
$\mathcal{K\cdot A\cdot N^+}\subseteq \GL_3(\R)$
contains a fundamental domain $\mathcal F^+$ relative to $\GL_3(\Z)$. 

The Haar measure $\mu$ on $\GL_3(\R)$ can be expressed in terms of the left-invariant Haar measures on $\mathcal K,\mathcal A$ and $\mathcal N$ as follows. Let $\mathcal{R\defeq A\cdot N}$ be the group of upper triangular matrices with positive diagonal elements. The Haar measure on $\mathcal A$ is $\dd A=\dd{b_1}\dd{b_2}\dd{b_3}/(b_1b_2b_3)$ where $b_1,b_2,b_3$ are the diagonal elements of $A\in\mathcal A$, and the Haar measure on $\mathcal N$ is $\dd N=\dd{\eta_1}\dd{\eta_2}\dd{\eta_3}$ where $\eta_1,\eta_2,\eta_3$ are the entries of $N\in \mathcal N$ above the diagonal. Write $\mu_{\mathcal K}$ for the (appropriately normalized) Haar measure on $\mathcal K$. Theorem 8.32 from \cite{knapp} implies that for any integrable function $f$, we have
\[\int_{\GL_3(\R)} f(X)\dmu{\mu}X=\int_{\mathcal N}\int_{\mathcal A}\int_{\mathcal K} f(KAN)\dfrac{\Delta_{\mathcal R}(AN)}{\Delta_{\GL_3(\R)}(AN)}\dfrac{\Delta_{\mathcal N}(N)}{\Delta_{\mathcal R}(N)}\dmu{\mu_{\mathcal K}} K\dd A\dd N\]
where $X=KR=KAN$, and $\Delta_G:G\to\R^+$ is the modular function associated with a topological group $G$. 
Let us write $\Delta(A,N)\defeq \frac{\Delta_{\mathcal R}(AN)}{\Delta_{\GL_3(\R)}(AN)}\frac{\Delta_{\mathcal N}(N)}{\Delta_{\mathcal R}(N)}$. The modular functions can be computed (in fact, one may show that $\Delta_{\GL_3(\R)}=\Delta_{\mathcal N}=1$, and $\Delta_{\mathcal R}(R)=b_1^2b_3^{-2}$ where $b_1,b_2,b_3$ are the diagonal elements of $R$), but all we will need is that $\Delta$ is bounded when restricted to a compact set, which follows from the fact that the modular functions are continuous and positive (see \cite{knapp}).

For our purposes, the parametrization
\begin{align}
	N&=\mat{ 1&\eta_1&\eta_2\\0&1&\eta_3\\0&0&1 }\in\mathcal N^+, &\eta_i\in[1,2), \label{param}\\
	A&=\mat{1/\sqrt{a_1}&0&0 \\ 0&1/\sqrt{a_2}&0 \\ 0&0&1/\sqrt{a_3}}\in\mathcal A, &a_i\in(0,\infty),
\end{align}
will be useful. (The forthcoming expression \eqref{fi} will take on a simpler form.) We get the Jacobian $\abs{\pypx{(b_1,b_2,b_3)}{(a_1,a_2,a_3)}  } = 2^{-3}(a_1a_2a_3)^{-2}$.   
Writing $\Delta(a,\eta)\defeq \Delta(A,N)$, and letting $f$ be a non-negative integrable function on $\GL_3(\R)/\GL_3(\Z)$, we obtain 
\begin{gather}
	\int\limits_{\GL_3(\R)/\GL_3(\Z)}f(X)\dmu\mu X = \int_{\mathcal F^+ }f(X)\dmu\mu X \leq \\
	\int_{\mathcal{K\cdot A\cdot N^+}}f(X)\dmu\mu X = 
	\iiint\limits_{\substack{K\in\mathcal K \\ a\in(0,\infty)^3\\\eta\in[1,2)^3 }} f(KAN) \dfrac{\Delta(a,\eta)}{2^3(a_1a_2a_3)^2} \dd a\dd \eta \dmu{\mu_{\mathcal K}} K,
\end{gather}
where $\dd a=\dd{a_1}\dd{a_2}\dd{a_3}$ and $\dd\eta=\dd{\eta_1}\dd{\eta_2}\dd{\eta_3}$ are the standard Lebesgue measures.

Integrating over the compact set $L_0\subseteq \GL_3(\R)/\GL_3(\Z)$ with respect to the measure $\mu$ corresponds to integrating over the compact set
\begin{gather}
	L_0'\defeq L_0\cdot\GL_3(\Z)\cap \mathcal F^+\subseteq \GL_3(\R) \label{lop}
\end{gather}
with respect to the measure $\dd a\dd \eta\dmu{\mu_{\mathcal K}}K$.
For each $i=1,2,3$, let $\psi_i$ be the characteristic function of the smallest closed interval contained in $(0,\infty)$ which contains all values that $a_i$ assumes when $X=KAN$ ranges over the compact set $L'_0$. Since $g(X)\defeq\abs{E_X(t)}^2$ is rotation invariant  (that is, $g(KX)=g(X)$ for all $K\in\mathcal K,X\in\GL_3(\R)$) and non-negative, we have
\begin{gather}
	\int_{L_0} \abs{E_X(t)}^2\dmu\mu X \leq \int_{[1,2)^3}\int_{(0,\infty)^3} \abs{E_{AN}(t)}^2 \dfrac{\Delta(a,\eta)}{2^3(a_1a_2a_3)^2} \psi_1(a_1)\psi_2(a_2)\psi_3(a_3)\dd a\dd \eta.
\end{gather}
The support of $\psi_1\psi_2\psi_3$ is contained in $(0,\infty)^3$, so for simplicity of notation, we will allow the inner integral to range over all of $\R^3$.
Since ${\Delta(a,\eta)}/({2^3(a_1a_2a_3)^2})$ and $4\pi \abs{\det A}^2$ are bounded above and below throughout the support of $\psi_1\psi_2\psi_3$, a bound of the right-hand side above will be equivalent, up to constants, to a bound of
\begin{gather}
	\int_{[1,2)^3}\int_{\R^3} \abs{E_{AN}(t)}^2 \dfrac{\Delta(a,\eta)}{2^3(a_1a_2a_3)^2} \dfrac{2^3(a_1a_2a_3)^2}{\Delta(a,\eta)}4\pi\abs{\det A}^2 \psi_1(a_1)\psi_2(a_2)\psi_3(a_3)\dd a\dd \eta \\
	= \int_{[1,2)^3}\int_{\R^3} \abs{E_{AN}(t)}^2 \psi(a)\dd a\dd \eta, \label{decomposition}
\end{gather}
where we have defined
\begin{gather}
	\psi(a)\defeq 4\pi \abs{\det A}^2 \psi_1(a_1)\psi_2(a_2)\psi_3(a_3). \label{def_of_psi}
\end{gather}
(It is convenient to introduce the factor $4\pi\abs{\det A}^2$ as it will later be cancelled by a factor appearing from $\abs{E_{AN}(t)}^2$.) Thus, in order to bound $\int_{L_0} \abs{E_X(t)}^2\dmu\mu X$, it suffices to bound \eqref{decomposition}.

\p {Setup}
%The proof will proceed by a series of reductions of the statement \eqref{bam}. We smoothed the integration limits in \eqref{decomposition} and we will now smooth the counting function and apply the Poisson summation formula.
We define a smoothed version of
\[N_X(t) = \sum_{k\in\Z^3}\chi_{t\Omega_X}(k)\]
by
\begin{gather}
	N_X^\e(t)\defeq \sum_{k\in\Z^3}\chi_{t\Omega_X}*\rho_\e(k) \label{strawberry}
\end{gather}
where $\rho:\R^3\to\R$ is a mollifier and $\rho_\e(x)\defeq \e^{-3}\rho(x/\e)$ for a parameter $\e=\e(t)>0$. (Recall that a mollifier is a smooth, non-negative function with compact support and unit mass.) We define $\rho(x)\defeq \rho_0(x_1)\rho_0(x_2)\rho_0(x_3)$ where $\rho_0:\R\to\R$ is an even mollifier such that $\abs{\hat{\rho_0}(y)} \ll e^{-\sqrt y}$ for large $y$; see \cite{ingham} for the construction of such a function $\rho_0$. We obtain the asymptotics
\begin{gather}
	\abs{\hat{\rho}(x)} \ll e^{-\sqrt{\abs{x_1}}-\sqrt{\abs{x_2}}-\sqrt{\abs{x_3}}} \ll e^{-\sqrt{\norm x }} \label{chocolate}
\end{gather}
as $\norm x\to\infty$, by the inequality $( \sqrt{\abs{x_1}}+\sqrt{\abs{x_2}}+\sqrt{\abs{x_3}} )^4 \geq x_1^2+x_2^2+x_3^2$. Note that the Fourier transform $\hat\rho$ is real-valued since $\rho$ is an even function.

Since the convolution $\chi_{t\Omega_X}*\rho_\e$ is smooth, we may apply the Poisson summation formula to the sum \eqref{strawberry}, and since both of the functions $\chi_{t\Omega_X}$ and $\rho_\e$ have compact support, the convolution theorem $\hat{\chi_{t\Omega_X}*\rho_\e}=\hat{\chi_{t\Omega_X}}\cdot \hat{\rho_\e}$ holds.
Moreover, $\hat{\chi_{t\Omega_X}}(0,0,0)=\int_{t\Omega_X}1=t^3\vol{\Omega_X}$ and $\hat{\rho_\e}(0,0,0)=\int\rho_\e=1$, so we get
\[N_X^\e(t)=t^3\vol\Omega_X+\sum_{k\neq(0,0,0)}\hat{\chi_{t\Omega_X}}(k)\hat{\rho_\e}(k)\eqdef t^3\vol\Omega_X+E_X^\e(t).\]

We first show that the function $N_X^\e$ approximates $N_X$ well:
\begin{Lemma} \label{Nlemma}
	There exists a constant $R>0$ such that
	\[N_X^\e(t-R\e)\leq N_X(t)\leq N_X^\e(t+R\e),\]
	where $R$ only depends on the mollifier $\rho$.
\end{Lemma}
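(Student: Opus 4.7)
The proof is a straightforward pointwise comparison using the compact support of $\rho$. Let $R_0 > 0$ satisfy $\supp\rho \subseteq B(0, R_0)$, so that $\supp\rho_\e \subseteq B(0, R_0\e)$. For $X$ fixed (or ranging over the compact set $L_0$, where $\|X\|_{\mathrm{op}}$ is uniformly bounded), set $R \defeq R_0 \cdot \|X\|_{\mathrm{op}}$; this is the quantity the lemma really needs, and since we only ever apply the lemma for $X$ inside a compact set, it may be treated as a constant. The idea is that convolution with $\rho_\e$ cannot change the value of $\chi_{s\Omega_X}$ except in the shell of width $R\e$ around $\partial(s\Omega_X)$, because $\|X(k-y)\|$ and $\|Xk\|$ differ by at most $\|X\|\cdot R_0\e = R\e$ whenever $\|y\| \leq R_0\e$.

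For the upper bound $N_X(t) \leq N_X^\e(t + R\e)$, it suffices to show the pointwise inequality $\chi_{t\Omega_X}(k) \leq (\chi_{(t+R\e)\Omega_X} * \rho_\e)(k)$ for every $k \in \Z^3$. If $\chi_{t\Omega_X}(k) = 0$ this is trivial since $\rho_\e \geq 0$. If $\chi_{t\Omega_X}(k) = 1$, i.e., $\|Xk\| \leq t$, then for every $y$ with $\|y\| \leq R_0\e$ the triangle inequality gives $\|X(k-y)\| \leq \|Xk\| + \|X\|\cdot R_0\e \leq t + R\e$, so $k-y \in (t+R\e)\Omega_X$. Hence the convolution at $k$ integrates $\rho_\e$ against the constant $1$, producing $\int \rho_\e = 1$, as needed. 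Summing over $k$ gives the upper bound.

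For the lower bound $N_X^\e(t - R\e) \leq N_X(t)$, the key is to show that $(\chi_{(t-R\e)\Omega_X} * \rho_\e)(k) > 0$ forces $k \in t\Omega_X$. If the convolution is positive, there exists $y$ with $\|y\| \leq R_0 \e$ such that $k - y \in (t-R\e)\Omega_X$, so $\|X(k-y)\| \leq t - R\e$. Again by the triangle inequality, $\|Xk\| \leq \|X(k-y)\| + \|X\|\cdot R_0 \e \leq (t - R\e) + R\e = t$, so $\chi_{t\Omega_X}(k) = 1$. Since each convolution value lies in $[0,1]$, summing yields $N_X^\e(t-R\e) \leq \sum_{k \in t\Omega_X} 1 = N_X(t)$.

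\textbf{Main obstacle.} There is no substantial obstacle: the argument reduces to a one-line triangle inequality in the norm $\|X \cdot\|$ on $\R^3$. The only minor subtlety is that the natural constant is $R_0\|X\|_{\mathrm{op}}$ rather than an absolute function of $\rho$ alone, but this dependence is harmless throughout the paper because we only ever integrate over the compact set $L_0$, on which $\|X\|_{\mathrm{op}}$ is bounded by a constant depending only on $L_0$.
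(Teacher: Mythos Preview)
Your proof is correct and follows essentially the same idea as the paper's: both exploit that $\supp\rho_\e$ lies in a ball of radius $R_0\e$ and use a triangle-inequality/distance-to-boundary argument to sandwich $\chi_{t\Omega_X}$ between the smoothed indicators of slightly smaller and larger dilates. The paper phrases this geometrically (points at distance $\geq R\e$ from $\partial(t\Omega_X)$), whereas you compute directly in the norm $\|X\cdot\|$; your version has the minor advantage of making explicit that the correct constant is $R=R_0\|X\|_{\mathrm{op}}$ rather than a function of $\rho$ alone, a dependence the paper's statement suppresses but which, as you note, is harmless once one restricts to the compact set $L_0$.
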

\begin{proof}
Let $R$ be the radius of a ball centered at the origin which contains the support of $\rho$, so that the support of $\rho_\e$ is contained in a ball of radius $\e R$.
Consider
\[\chi_{t\Omega_X}*\rho_\e(k)=\int \rho_\e(x)\chi_{t\Omega_X}(k-x)\dd x.\]
The integral ranges over all $x\in\supp\rho_\e$, so we may assume that $\|x\|\leq \e R$ inside the integral. If $k$ is inside $t\Omega_X$ and at a distance at least $\e R$ from the boundary $\partial(t\Omega_X)$, then $\chi_{t\Omega_X}(k-x)=1$, so the integral becomes $\int \rho_\e(x)\dd x=1$, which agrees with $\chi_{t\Omega_X}(k)=1$. If on the other hand $k$ is outside $t\Omega_X$ and at a distance at least $\e R$ from the boundary $\partial(t\Omega_X)$, then $\chi_{t\Omega_X}(k-x)=0$, so the integral vanishes and again agrees with $\chi_{t\Omega_X}(k)=0$. Finally, if $k$ is at a distance at most $\e R$ from the boundary $\partial(t\Omega_X)$, then since $0\leq \chi_{t\Omega_X}\leq 1$ and $\rho_\e$ is nonnegative, the integral is bounded below by $0$ and above by $\int\rho_\e=1$. We have thus proved that $\chi_{t\Omega_X}*\rho_\e$ equals $\chi_{t\Omega_X}$ at all points at a distance at least $\e R$ from the boundary of $t\Omega_X$, and at all other points it assumes a value in $[0,1]$. This proves the lemma, since $N_X(t)$ counts the number of lattice points inside $t\Omega_X$, while $N_X^\e(t-R\e)$ counts each of these with a weight at most 1, and $N_X^\e(t+R\e)$ counts all the same lattice points, plus a few more with various weights in $[0,1]$.
\end{proof}
% Let $R$ be the radius of a ball centered at the origin which contains the support of $\rho$, so that the support of $\rho_\e$ is contained in a ball of radius $\e R$. I claim that $\chi_{t\Omega_X}*\rho_\e$ equals $\chi_{t\Omega_X}$ at all points at a distance at least $\e R$ from the boundary of $t\Omega_X$, and at all other points it assumes a value in $[0,1]$. This will prove the lemma, since $N_X(t)$ counts the number of lattice points inside $t\Omega_X$, while $N_X^\e(t-R\e)$ counts each of these with a weight at most 1, and $N_X^\e(t+R\e)$ counts all the same lattice points, plus a few more with various weights in $[0,1]$.

% The claim follows by considering
% \[\chi_{t\Omega_X}*\rho_\e(k)=\int \rho_\e(x)\chi_{t\Omega_X}(k-x)\dd x.\]
% The integral ranges over all $x\in\supp\rho_\e$, so we may assume that $\|x\|\leq \e R$ inside the integral. If $k$ is inside $t\Omega_X$ and at a distance at least $\e R$ from the boundary, then $\chi_{t\Omega_X}(k-x)=1$, so the integral becomes $\int \rho_\e(x)\dd x=1$, which agrees with $\chi_{t\Omega_X}(k)=1$. If on the other hand $k$ is outside $t\Omega_X$ and at a distance at least $\e R$ from the boundary, then $\chi_{t\Omega_X}(k-x)=0$, so the integral vanishes and again agrees with $\chi_{t\Omega_X}(k)=0$. Finally, if $k$ is at a distance at most $\e R$ from the boundary, then since $0\leq \chi_{t\Omega_X}\leq 1$ and $\rho_\e$ is nonnegative, the integral is bounded below by $0$ and above by $\int\rho_\e=1$.

Using the lemma, we arrive at:
\begin{Claim} \label{vroom}
	To prove Theorem \ref{bam} for $n=3$ it suffices to prove that
	\begin{gather}
		\int_{[1,2)^3}\int_{\R^3} \abs{E_{AN}^\e(t)}^2 \psi(a)\dd a\dd \eta \lll t^2 \label{hypothesis}
	\end{gather}
	for all $\e=\e(t)$ such that $\e\geq 1/t$ for all sufficiently large $t$.
\end{Claim}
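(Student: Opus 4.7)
My approach is a short three-step reduction: (i) bound $E_X(t)$ pointwise in terms of $E_X^\e(t\pm R\e)$ via Lemma \ref{Nlemma}, (ii) integrate over $L_0$ and push the resulting estimate through the Haar decomposition from Section \ref{decompsection}, and (iii) apply the hypothesis \eqref{hypothesis} with a well-chosen $\e=\e(t)$.

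For step (i), Lemma \ref{Nlemma} gives $N_X^\e(t-R\e)\leq N_X(t)\leq N_X^\e(t+R\e)$. Using $N_X^\e(s)=s^3\vol\Omega_X+E_X^\e(s)$ and subtracting $\vol(t\Omega_X)=t^3\vol\Omega_X$ yields the pointwise sandwich
\[ E_X^\e(t-R\e)-\delta_X \;\leq\; E_X(t) \;\leq\; E_X^\e(t+R\e)+\delta_X, \]
where $\delta_X\defeq\bigl((t+R\e)^3-t^3\bigr)\vol\Omega_X = O(t^2\e\cdot\vol\Omega_X)$ for $\e\leq t$. Squaring and applying $(a+b)^2\leq 2(a^2+b^2)$ gives the pointwise bound $|E_X(t)|^2 \ll t^4\e^2(\vol\Omega_X)^2 + |E_X^\e(t-R\e)|^2 + |E_X^\e(t+R\e)|^2$.

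For step (ii), $\vol\Omega_X=\vol(X\inv\Omega)$ is uniformly bounded on the compact set $L_0$, and the Haar decomposition leading to \eqref{decomposition} only required the integrand to be non-negative and rotation invariant in $X$; both properties are inherited by $|E_X^\e(s)|^2$ (since $\Omega_{KX}=\Omega_X$ for orthogonal $K$, so that $N_{KX}^\e(s)=N_X^\e(s)$). Applying the same chain of inequalities verbatim to $|E_X^\e(t\pm R\e)|^2$ in place of $|E_X(t)|^2$ produces
\[ \Eloc{|E_X(t)|^2} \;\ll\; t^4\e^2 + \sum_{\pm}\int_{[1,2)^3}\int_{\R^3}|E_{AN}^\e(t\pm R\e)|^2\,\psi(a)\dd a\dd \eta. \]

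For step (iii), I would set $\e=2/t$, so that $t^4\e^2=4t^2\lll t^2$. For $s=t\pm R\e = t+O(1/t)$ one has $\e\geq 1/s$ for all sufficiently large $t$, so the hypothesis \eqref{hypothesis} applies at argument $s$ and bounds each of the two remaining integrals by $\lll s^2 \ll t^2$. Combining yields $\Eloc{|E_X(t)|^2}\lll t^2$, hence $\sqrt{\Eloc{|E_X(t)|^2}}\lll t = t^{(n-1)/2}$, as Theorem \ref{bam} requires for $n=3$. The main thing to watch is that the volume-difference error forces $\e\lesssim 1/t$ while the hypothesis requires $\e\geq 1/t$, so we are working exactly at the critical scale $\e\asymp 1/t$ — but this is precisely the regime the hypothesis is tailored for, and the choice $\e=2/t$ accommodates both constraints comfortably.
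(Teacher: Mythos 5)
Your proof is correct and follows essentially the same route as the paper's: sandwich $E_X(t)$ between $E_X^\e(t\pm R\e)$ plus an $O(t^2\e\,\vol\Omega_X)$ volume error via Lemma \ref{Nlemma}, set $\e=2/t$ to make that error $O(t)$, check that $|E_X^\e(s)|^2$ inherits rotation-invariance and non-negativity so the decomposition \eqref{decomposition} applies, and invoke the hypothesis at arguments $t\pm R\e$ (where $\e\geq 1/(t\pm R\e)$ for large $t$). The only cosmetic difference is that the paper bounds $|E_X(t)|$ by a $\max$ of two terms plus $O(t)$ before squaring, whereas you square a two-sided sandwich directly; the two are interchangeable.
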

\begin{proof}
	Lemma \ref{Nlemma} implies that
	\begin{gather}
		E_X(t)\leq E_X^{\e_0}(t+R\e_0)+\vol(\Omega_X)((t+R\e_0)^3-t^3), \\
		-E_X(t)\leq -E^{\e_0}_X(t-R\e_0)+\vol(\Omega_X)(t^3-(t-R\e_0)^3),
	\end{gather} for any $\e_0>0$. Choosing $\e_0\defeq 2/t$ we get
	\begin{gather}
		\abs{E_X(t)}\leq\max\left( \abs{E_X^{\e_0}(t+R\e_0)+\BigO(t)} , \abs{E_X^{\e_0}(t-R\e_0)+\BigO(t)} \right) \\
		\ll \abs{E_X^{\e_0}(t+R\e_0)} + \abs{E_X^{\e_0}(t-R\e_0)} + t.
	\end{gather}
	The asymptotic constant depends on the determinant of $X$, but if we restrict $X$ to the compact set $L_0'$ (see \eqref{lop}), then the determinant of $X$ is bounded by a constant which only depends on the fixed set $L_0$.
	By \eqref{decomposition} we have
	\begin{gather}
		\int_{L_0}\abs{E_X(t)}^2\dmu\mu X \ll
		\int_{[1,2)^3}\int_{\R^3} \abs{E_{AN}(t)}^2 \psi(a)\dd a\dd \eta \\
		\ll
		\int_{[1,2)^3}\int_{\R^3} \abs{E_X^{\e_0}(t+R\e_0)}^2 \psi(a)\dd a\dd \eta +
		\int_{[1,2)^3}\int_{\R^3} \abs{E_X^{\e_0}(t-R\e_0)}^2 \psi(a)\dd a\dd \eta +
		t^2,
	\end{gather}
	and noting that $\e_0\geq 1/(t+R\e_0)$ and $\e_0 \geq 1/(t-R\e_0)$ for all sufficiently large $t\pm R\e_0$, the hypothesis \eqref{hypothesis} implies that the right-hand side above is
	\begin{gather}
		%\int_{L_0}\abs{E_X(t)}^2\dmu\mu X
		\lll
		(t+R\e_0)^2+(t-R\e_0)^2+t^2 \ll t^2,
	\end{gather}
	and thus $\sqrt{\int_{L_0}\abs{E_X(t)}^2\dmu\mu X} \lll t$ follows.
\end{proof}

For the remainder of the section we will assume that $\e\geq 1/t$ for all sufficiently large $t$. We will now estimate the behavior of $E_X^\e$. Consider the Fourier transform of the characteristic function $\chi_{\Omega}$ of the standard unit ball $\Omega$ in $\R^3$. Taking advantage of the fact that $\chi_\Omega$ is a radial function and hence that its Fourier transform is radial as well, an easy calculation shows that (see equation 10 in chapter 6.4 in \cite{SteinShakarchi})
\[\hat{\chi_\Omega}(k)=\dfrac2{\|k\|}\int_0^1 \sin(2\pi \|k\|r)r\dd r,\]
which can be integrated by parts to get
\[\hat{\chi_\Omega}(k)=-\dfrac{\cos(2\pi\|k\|)}{\pi\|k\|^2}+\dfrac{\sin(2\pi\|k\|)}{2\pi^2\|k\|^3}.\]
Since $\Omega_X=X\inv \cdot \Omega$ we get
\begin{gather}
	\hat{\chi_{\Omega_X}}(k)=\int_{X\inv \cdot\Omega}e^{2\pi i x\cdot k}\dd x
	=\int_{ \Omega} e^{2\pi i X\inv y\cdot k}\abs{\det {X\inv}}\dd y \\
	=\abs{\det {X\inv}}\hat{\chi_\Omega}((X\inv)\T k) =
	\abs{\det X}\inv\left(-\dfrac{\cos(2\pi\|k\|_X)}{\pi\|k\|_X^2}+\dfrac{\sin(2\pi\|k\|_X)}{2\pi^2\|k\|_X^3}\right),
\end{gather}
recalling the definition
\[\|k\|_X = \|(X\inv)\T k\|.\]
Recall that $E_X^\e(t)=\sum_{k\neq(0,0,0)}\hat{\chi_{t\Omega_X}}(k)\hat{\rho_\e}(k)$. It is straightforward to show that $\hat{\chi_{t\Omega_X}}(k) = t^3\hat{\chi_{\Omega_X}}(t k)$ and $\hat{\rho_\e}(k) = \hat\rho(\e k)$. Hence we can write
	\begin{gather}
		E_X^\e(t)= S_1+S_2\defeq \\
	-\abs{\det X}\inv t\sum_{k\neq(0,0,0)}\dfrac{\cos(2\pi\|tk\|_X)}{\pi\|k\|_X^2}\hat\rho(\e k)+
	\abs{\det X}\inv \sum_{k\neq(0,0,0)}\dfrac{\sin(2\pi\|tk\|_X)}{2\pi^2\|k\|_X^3}\hat\rho(\e k),
	\end{gather}
	where both sums $S_1,S_2$ are real since $\hat\rho$ is real-valued.
	For $X=AN, A\in\mathcal A,N\in\mathcal N^+,$ we have $\abs{\det X}\inv \ll 1$, so for such $X$ we get
	\begin{gather}
		\abs{S_2}\ll \sum_{k\neq (0,0,0)}\dfrac{\abs{\hat\rho(\e k)} }{\norm k^3}.
	\end{gather}
	We use the fact that $\abs{\hat \rho(\e k)}$ decreases as $1/\|\e k\|^N \leq t^N/\|k\|^N$ for any $N>0$, provided that $\e\geq 1/t$. Then we get $\abs{S_2}\ll \sum_{k\neq0}t^N/\|k\|^{3+N}=t^N\sum_{k\neq0}1/\|k\|^{3+N}\ll t^N$, where the final sum converges to a constant by integral comparison for any $N>0$. Choosing $N=1/2$ gives us $\abs{S_2}\ll t^{1/2}$.

	Consequently we have
	\begin{gather}
		\abs{E_X^\e(t)}^2 = (S_1+S_2)^2 \ll S_1^2+S_2^2 \ll S_1^2+t,
	\end{gather}
	and thus, to prove Theorem \ref{bam} for $n=3$, by Claim \ref{vroom} it will suffice to prove that $\int_{[1,2)^3}\int_{\R^3} S_1^2\psi(a)\dd a\dd\eta\lll t^2$, where
	\begin{gather}
		S_1^2 = \abs{\det X}^{-2} t^2 \sum_{k,l\neq (0,0,0)}\dfrac{\cos(2\pi\|tk\|_X)\cos(2\pi\|tl\|_X)}{\pi^2\|k\|_X^2\|l\|_X^2}\hat\rho(\e k)\hat\rho(\e l)
	\end{gather}
	and $X=AN$, using the parametrization \eqref{param}.
	Write the product $\cos(2\pi\|tk\|_X)\cos(2\pi\|tl\|_X)$ as $(e^{\alpha}+e^{-\alpha})(e^{\beta}+e^{-\beta})/4=\frac14(e^{\alpha+\beta}+e^{\alpha-\beta}+e^{-\alpha+\beta}+e^{-\alpha-\beta})$ where $\alpha\defeq 2\pi i t \|k\|_X$ and $\beta\defeq 2\pi i t\|l\|_X$. We split the integral into a sum of four integrals and treat each case separately, that is, we will prove
	\[t^2\int_{[1,2)^3}\int_{\R^3}\sum_{k,l\neq(0,0,0)}\abs{\det A}^{-2} \dfrac{e^{2\pi it\Phi_{k,l}(AN)}}{4\pi^2\|k\|_{AN}^2\|l\|_{AN}^2}\hat\rho(\e k)\hat\rho(\e l)\psi(a)\dd a\dd \eta \lll t^2\]
	where $\Phi_{k,l}(X)=\pm \|k\|_X\pm\|l\|_X$, for all four different combinations of sign choices.

We cancel the factor $t^2$ on both sides and exchange the order of integration and summation (noting that the sum is uniformly convergent by the rapid decay of $\hat\rho$). %  \comment{by weierstrass's m-test}
 Thus, recalling that $\psi(a)=4\pi\abs{\det A}^2\psi_1(a_1)\psi_2(a_2)\psi_3(a_3)$, we arrive at:
\begin{Claim} \label{afterstuff}
	To prove Theorem \ref{bam} for $n=3$ it suffices to prove that
	\begin{gather}
		\sum_{k,l\neq(0,0,0)} \dfrac{\abs{\hat\rho(\e k)\hat\rho(\e l)} }{\|k\|^{2}\|l\|^{2}} \abs{I_{k,l}(t)} \lll 1, \label{cloudberry}
	\end{gather}
	for all $\e=\e(t)$ such that $\e\geq 1/t$ for all sufficiently large $t$, where
	\begin{gather}
		I_{k,l}(t) \defeq \int_{[1,2)^3}\int_{\R^3} e^{2\pi i t\Phi_{k,l}(AN)}\psi_{k,l}(AN) \dd a\dd \eta, \label{Ikl} \\
		%I_{k,l}(t) \defeq \int e^{2\pi i t\Phi_{k,l}(A)} \psi_{k,l}(A)\dmu\mu A,\\
		\Phi_{k,l}(AN)\defeq \pm \|k\|_{AN}\pm \|l\|_{AN}, \notag \\
		\psi_{k,l}(AN)\defeq \left(\dfrac{\|k\|}{\|k\|_{AN}}\right)^2 \left(\dfrac{\|l\|}{\|l\|_{AN}}\right)^2\psi_1(a_1)\psi_2(a_2)\psi_3(a_3), \notag
	\end{gather}
	for all four choices of signs in the definition of $\Phi_{k,l}$.
\end{Claim}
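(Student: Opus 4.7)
The plan is to carry out the bookkeeping implicit in the sentence immediately preceding the claim. By the earlier analysis, proving Theorem \ref{bam} for $n=3$ has been reduced to showing
\[
t^2 \int_{[1,2)^3}\!\!\int_{\R^3} \sum_{k,l\neq(0,0,0)} |\det A|^{-2} \frac{e^{2\pi it\Phi_{k,l}(AN)}}{4\pi^2\|k\|_{AN}^2\|l\|_{AN}^2}\hat\rho(\e k)\hat\rho(\e l)\,\psi(a)\dd a\dd\eta \lll t^2
\]
for each of the four sign choices in $\Phi_{k,l}$. My strategy is: first substitute $\psi(a) = 4\pi|\det A|^2\psi_1(a_1)\psi_2(a_2)\psi_3(a_3)$ so the $|\det A|^{-2}$ from the Fourier coefficient cancels against the $|\det A|^2$ from $\psi$, leaving a harmless absolute constant $\tfrac{1}{\pi}$ that I absorb into $\lll$; then use the algebraic identity
\[
\frac{\psi_1(a_1)\psi_2(a_2)\psi_3(a_3)}{\|k\|_{AN}^2\|l\|_{AN}^2} = \frac{\psi_{k,l}(AN)}{\|k\|^2\|l\|^2},
\]
which puts the integrand in the precise form appearing inside $I_{k,l}(t)$; next interchange $\sum_{k,l}$ with $\int\!\!\int\dd a\dd\eta$ to produce $I_{k,l}(t)$; finally apply the triangle inequality, noting that the four sign choices contribute a harmless factor of $4$.

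To justify the interchange by Fubini it suffices to check that the double sum is dominated by an absolutely convergent series uniformly in $(a,\eta)$. On the support of $\psi_1\psi_2\psi_3$, each $a_i$ lies in a compact subinterval of $(0,\infty)$ and $\eta_i\in[1,2)$, so $AN$ ranges over a compact subset of $\GL_3(\R)$; hence there are constants $0<c_1\leq c_2$ with $c_1\|k\|\leq \|k\|_{AN}\leq c_2\|k\|$ for every nonzero $k$ and every such $(A,N)$. Consequently $\psi_{k,l}(AN)$ is bounded uniformly in $k,l,a,\eta$, and combined with the rapid decay $|\hat\rho(\e k)|\ll e^{-\sqrt{\|\e k\|}}$ from \eqref{chocolate} the double sum converges absolutely, uniformly in $(a,\eta)$, so Fubini applies and the swap is legitimate.

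Since the proof of the claim itself is purely algebraic rearrangement, there is no genuine obstacle here; the real difficulty is deferred to the subsequent sections, where sharp oscillatory bounds on $|I_{k,l}(t)|$ are needed to actually establish \eqref{cloudberry}. It is worth emphasizing that the trivial bound $|I_{k,l}(t)|\ll 1$, obtained from the compactness of the integration domain and the uniform boundedness of $\psi_{k,l}$, is nowhere near enough: since $\sum_{k\neq 0}|\hat\rho(\e k)|/\|k\|^2\ll 1/\e\ll t$ in three dimensions, it would only yield $\sum_{k,l}\ldots\ll t^2$, whereas \eqref{cloudberry} requires the far stronger $\lll 1$, so genuine cancellation in $I_{k,l}(t)$ must be exploited.
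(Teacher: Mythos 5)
Your argument is correct and matches the paper's own (very brief) derivation of Claim~\ref{afterstuff}: the paper reaches the claim by cancelling $t^2$, substituting $\psi(a)=4\pi\abs{\det A}^2\psi_1\psi_2\psi_3$ so the determinants cancel, recognizing $\psi_1\psi_2\psi_3/(\norm{k}_{AN}^2\norm{l}_{AN}^2)=\psi_{k,l}(AN)/(\norm{k}^2\norm{l}^2)$, and interchanging sum and integral using the rapid decay of $\hat\rho$ --- exactly the bookkeeping you carry out. Your Fubini justification via the uniform comparability of $\norm{k}_{AN}$ with $\norm k$ on the compact support and your closing observation that the trivial bound $\abs{I_{k,l}(t)}\ll 1$ only gives $\ll t^2$ are both accurate and consistent with the paper.
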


Consider $\Phi_{k,l}(AN)$ for $A\in \mathcal A, N\in\mathcal N^+$.
Write $\moo k\defeq (N\inv)\T k$ and $\moo l\defeq (N\inv)\T l$. Then
	$\|k\|_{AN}=\|(A\inv)\T(N\inv)\T k\|=\|A\inv \moo k\|$. Similarly $\|l\|_A=\|A\inv \moo l\|$. Using the parametrization \eqref{param}, we get
	\begin{align}
	 	A\inv =\mat{\sqrt{a_1}&0&0\\0&\sqrt{a_2}&0\\0&0&\sqrt{a_3}}, & &
	 	(N\inv)\T=\mat{1&0&0\\-\eta_1&1&0\\\eta_1 \eta_3-\eta_2&-\eta_3&1}
	\end{align}
	and therefore%\comment{följande är fult; ändra widehat till hat?}
	\begin{align}
		\Phi_{k,l}(AN)=\pm\sqrt{a_1\moo k_1^2+a_2\moo k_2^2+a_3\moo k_3^2}
		              \pm\sqrt{a_1\moo l_1^2+a_2\moo l_2^2+a_3\moo l_3^2}. \label{fi}
		% \Phi_{k,l}(AN)=\pm\sqrt{a_1{\moo k_1\!}^2+a_2{\moo k_2\!}^2+a_3{$\moo k_3\!}^2}
		%  \pm\sqrt{a_1{\moo l_1\!}^2+a_2{\moo l_2\!}^2+a_3{\moo l_3\!}^2} \label{fi}
	\end{align}
	where $\moo k_i^2$ denotes the square of the $i$th component of the vector $\moo k=(N\inv)\T k$, and similarly for $\moo l_i^2$. Note that our choice of parametrization \eqref{param} of the entries of $A$ turned the expressions inside the square roots in the exponent $\Phi_{k,l}(AN)$ into linear forms of $a_1,a_2,a_3$.

Since $\norm {X\inv}_{\text{op}}\norm k\leq \norm {Xk} \leq \norm X_{\text{op}}\norm k$ where $\norm X_{\text{op}}$ is the operator norm of the matrix $X$ for any $X$, it follows that $\norm k_{AN}\ll \norm k\ll \norm k_{AN}$ and likewise for $l$, when $AN\in\mathcal A\cdot\mathcal N^+$. Hence $\psi_{k,l}(AN)$ can be bounded above and below by constants uniform in $k$ and $l$ (but depending on $L_0$), and thus $\abs{I_{k,l}(t)}\ll \int\abs{\psi_{k,l}}\ll 1$.

We now show that we may neglect the terms in the sum \eqref{cloudberry} for which either $\norm k$ or $\norm l$ is large, where the notion of ``large'' is given by the following definition.

\begin{Definition}
	We set $\BOUND\defeq 32t(\log t)^2$ for all $t>1$. Note that $\BOUND\lll t$ and $\log(\BOUND)\lll 1$. 
\end{Definition}

\begin{Lemma} \label{crude}
	Assuming that $\e\geq 1/t$ for all sufficiently large $t$, we have
	\[ \sum_{\substack{k,l\neq(0,0,0)\\ \|k\|\geq \BOUND \text{ or }\|l\|\geq \BOUND }} \dfrac{\abs{\hat\rho(\e k)\hat\rho(\e l)} }{\|k\|^{2}\|l\|^{2}} \abs{I_{k,l}(t)} \lll 1\]
	where the analogous bound holds if we interchange $k$ and $l$.
	%The asymptotic constant depends on $\delta$.
\end{Lemma}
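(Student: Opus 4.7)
The plan is to apply the trivial bound $\abs{I_{k,l}(t)}\ll 1$ established in the discussion immediately following Claim~\ref{afterstuff}, and then exploit the rapid decay of $\hat\rho$ to show that the resulting double sum is in fact polynomially small in $t$. By the symmetry $k\leftrightarrow l$, it suffices to bound the contribution from the region $\norm k\geq \BOUND$, since the case $\norm l\geq \BOUND$ is handled identically. After discarding $\abs{I_{k,l}(t)}$, the summand factors, so I am reduced to bounding the product
\[
\Bigl(\sum_{\norm k\geq \BOUND}\frac{\abs{\hat\rho(\e k)}}{\norm k^2}\Bigr)\cdot\Bigl(\sum_{l\neq 0}\frac{\abs{\hat\rho(\e l)}}{\norm l^2}\Bigr).
\]

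For the second factor, I would compare with the integral $\int_{\R^3}\abs{\hat\rho(\e y)}/\norm y^2\,dy$ by grouping lattice points into unit cubes; substituting $z=\e y$ and noting that $\int_{\R^3}\abs{\hat\rho(z)}/\norm z^2\,dz$ converges (the factor $r^2\,dr$ in spherical coordinates absorbs the singularity at the origin, and \eqref{chocolate} gives rapid decay at infinity), this yields a bound of order $1/\e\leq t$.

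For the first factor, I would break the region into dyadic shells $\{2^j\BOUND\leq \norm k<2^{j+1}\BOUND\}$, $j\geq 0$, and use $\abs{\hat\rho(\e k)}\ll e^{-\sqrt{\e\norm k}}$. Each shell contains $\ll (2^j\BOUND)^3$ lattice points, so its contribution is $\ll 2^j\BOUND\cdot e^{-\sqrt{\e\cdot 2^j\BOUND}}$, and the $j$-sum is dominated by $j=0$ since $e^{-\sqrt{r_0 2^j}}$ decays double-exponentially. The purpose of the constant $32$ in $\BOUND=32t(\log t)^2$ is precisely that, together with $\e\geq 1/t$, it forces $\e\BOUND\geq 32(\log t)^2$, hence $\sqrt{\e\BOUND}\geq\sqrt{32}\log t$ and $e^{-\sqrt{\e\BOUND}}\leq t^{-\sqrt{32}}$. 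This gives a bound of order $\BOUND\cdot t^{-\sqrt{32}}\ll t^{1-\sqrt{32}}(\log t)^2$ for the first factor.

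Multiplying the two factors yields $\ll t^{2-\sqrt{32}}(\log t)^2$, and since $\sqrt{32}>5$ this is $\ll t^{-3}(\log t)^2\lll 1$ as required. No step is seriously difficult, but the main subtlety is that one cannot simply bound $\hat\rho$ by a constant in either factor: the bare sum $\sum_{l\neq 0}\norm l^{-2}$ already diverges in $\R^3$, so the exponential decay of $\hat\rho$ must be used on both sides, and the numerical constant $32$ in $\BOUND$ is calibrated so that the decay produced by the $k$-sum comfortably beats the $1/\e\leq t$ loss produced by the $l$-sum.
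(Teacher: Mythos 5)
Your argument is correct, and it reaches the same conclusion as the paper's proof via a mildly different route. The paper, after fixing $\norm k\geq\BOUND$, further splits the $l$-sum into $\{\norm l<\BOUND\}$ and $\{\norm l\geq\BOUND\}$: in the first case it bounds $\abs{\hat\rho(\e l)}/\norm l^{2}$ trivially by a constant and estimates the number of terms by $\BOUND^3\lll t^3$, while in the second case it uses the exponential decay on both factors. You avoid this case split by keeping the full sum over $l\neq 0$ and proving $\sum_{l\neq 0}\abs{\hat\rho(\e l)}/\norm l^{2}\ll 1/\e\leq t$ directly via the rescaling $z=\e y$ in the comparison integral; this is a valid step and slightly tidier. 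Your dyadic-shell estimate for the $k$-factor is also fine and in fact sharper than the paper's (you start the effective integration at $\BOUND$ rather than $\BOUND/2$, giving an exponent $\sqrt{32}$ rather than $4$), though either suffices. One small correction to your closing commentary: it is not true that one must use the decay of $\hat\rho$ in the $l$-factor. The paper bounds $\hat\rho$ by a constant there and pays the price $\BOUND^3\lll t^3$, which is still absorbed by the superpolynomial decay of the $k$-factor; the constraint you identify is only binding if, as you chose to do, one sums over all of $l\neq 0$ without a cutoff.
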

\begin{proof}
	It suffices to bound the sum
	\begin{gather}
		\sum_{\substack{k,l\neq(0,0,0)\\ \norm k\geq \BOUND }} =
		\sum_{\substack{k,l\neq(0,0,0)\\ \norm k,\norm l \geq \BOUND }} +
		\sum_{\substack{k,l\neq(0,0,0)\\ \norm k \geq \BOUND > \norm l }}.	\label{raspberry}
	\end{gather}
	Using the bounds $\abs{I_{k,l}(t)}\ll 1$, $\abs{\hat{\rho}(\e l)}\ll 1$, and finally $\abs{\hat\rho(\e k)} \ll e^{-\sqrt{\norm{\e k}}}$ from \eqref{chocolate}, and assuming that $\e\geq 1/t$, the second sum on the right above can be written as
	\begin{gather}
		\sum_{\substack{k,l\neq(0,0,0)\\ \norm k \geq \BOUND > \norm l }} \dfrac{\abs{\hat\rho(\e k)\hat\rho(\e l)} }{\|k\|^{2}\|l\|^{2}} \abs{I_{k,l}(t)} \ll
		\sum_{\substack{l\neq(0,0,0)\\ \norm l\leq  \BOUND  }} 1
		\sum_{\substack{k\neq(0,0,0)\\ \norm k \geq \BOUND  }}    \dfrac{ e^{-\sqrt{\norm{k/t}}}  }{\|k\|^{2}}. \label{peanuts}
	\end{gather}
	The first sum on the right-hand side of \eqref{peanuts} is
	\begin{gather}
		\ll \int_1^{\BOUND}r^2\dd r \ll \BOUND^3 \lll t^3.
	\end{gather}
	The second sum on the right-hand side of \eqref{peanuts} is
	\begin{gather}
		\ll \int_{\BOUND/2}^\infty e^{-\sqrt{r/t}} \dd r \ll
		\left.\left(-2te^{-\sqrt{r/t}}\left(\sqrt{r/t}+1\right)  \right)\right|_{r=\BOUND/2 } \ll \\
				te^{-\sqrt{16(\log t)^2}}\sqrt{16(\log t)^2} \ll
				te^{-4\log t}(\log t)^2 = t^{-3}(\log t)^2.
	\end{gather}
	Thus the right-hand side of \eqref{peanuts} is
	\begin{gather}
		\lll t^3\cdot t^{-3} (\log t)^2 \lll 1.
	\end{gather}
	The first sum on the right-hand side of \eqref{raspberry} can be written as
	\begin{gather}
		\sum_{\substack{k,l\neq(0,0,0)\\ \norm k,\norm l \geq \BOUND }} \dfrac{\abs{\hat\rho(\e k)\hat\rho(\e l)} }{\|k\|^{2}\|l\|^{2}} \abs{I_{k,l}(t)} \ll
		\sum_{\substack{k\neq(0,0,0)\\ \norm k \geq \BOUND  }}    \dfrac{ e^{-\sqrt{\norm{k/t}}}  }{\|k\|^{2}}
		\sum_{\substack{l\neq(0,0,0)\\ \norm l \geq \BOUND  }}    \dfrac{ e^{-\sqrt{\norm{l/t}}}  }{\|l\|^{2}},
	\end{gather}
	which by our previous calculation is $\ll (t^{-3}(\log t)^2)^2 \ll 1$.
\end{proof}

\begin{Remark}
	If one only wants to prove a weaker version of Theorem \ref{bam} with a bound of the form $\BigO(t^{(n-1)/2+\delta})$ for some $\delta>0$, with no log factors, it suffices to take $\BOUND=t^{1+\delta'}$ for some sufficiently small $\delta'>0$, and to use the elementary estimate $\hat{\rho}(x)\ll 1/\norm x^N$, $N>0$ for the Fourier transform of $\rho$ in the proof of Lemma \ref{crude}.
\end{Remark}

The lemma above shows that we may restrict ourselves to summing only over the integer vectors $k,l\neq (0,0,0)$ bounded in norm by $\BOUND$, and thus it is enough to sum over $k,l\neq (0,0,0)$ such that $\abs{k_i},\abs{l_j}\leq \BOUND$ for all $i,j\in\{1,2,3\}$.
Thus we have:

\begin{Claim}
	To prove Theorem \ref{bam} for $n=3$ it suffices to prove that
	\begin{gather}
		\sum_{\substack{k,l\in \somevectors }} \dfrac{1}{\|k\|^{2}\|l\|^{2}} \abs{I_{k,l}(t)} \lll 1 \label{boundthis}	
	\end{gather}
	where the sum extends over all nonzero integer vectors $k,l\in\Z^3$ with entries bounded by $\BOUND$.
\end{Claim}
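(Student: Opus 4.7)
The plan is to derive the final claim directly from Claim \ref{afterstuff} together with the tail estimate of Lemma \ref{crude}, so that what remains to verify is exactly the truncated bound \eqref{boundthis}. Under the standing assumption $\epsilon \geq 1/t$, Claim \ref{afterstuff} reduces the theorem for $n=3$ to showing that the unrestricted sum
\[
\Sigma(t) \defeq \sum_{k,l \neq (0,0,0)} \frac{|\hat\rho(\epsilon k)\hat\rho(\epsilon l)|}{\|k\|^2 \|l\|^2} |I_{k,l}(t)|
\]
satisfies $\Sigma(t) \lll 1$. I split $\Sigma(t) = \Sigma_{\text{main}}(t) + \Sigma_{\text{tail}}(t)$, where the tail piece collects all terms with $\|k\| \geq \mathcal U(t)$ or $\|l\| \geq \mathcal U(t)$ and the main piece contains the remaining terms (for which $\|k\|,\|l\| < \mathcal U(t)$).

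For the tail, Lemma \ref{crude} (applied as stated, together with the symmetric statement in which the roles of $k$ and $l$ are swapped, which is explicitly mentioned there) gives $\Sigma_{\text{tail}}(t) \lll 1$ immediately.

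For the main piece, since $\rho$ is a mollifier we have $\hat\rho \in L^\infty(\R^3)$ with $\hat\rho(0) = \int \rho = 1$; in particular $|\hat\rho(\epsilon k)\hat\rho(\epsilon l)| \ll 1$ uniformly in $k, l$ and $\epsilon$. Therefore
\[
\Sigma_{\text{main}}(t) \ll \sum_{\substack{k,l \neq (0,0,0) \\ \|k\|,\|l\| < \mathcal U(t)}} \frac{|I_{k,l}(t)|}{\|k\|^2 \|l\|^2}.
\]
The Euclidean ball of radius $\mathcal U(t)$ is contained in the coordinate-wise box defining $\Z^3(\mathcal U(t))$, because $|k_i| \leq \|k\| < \mathcal U(t)$ for each $i$, and every term in the sum is non-negative; hence this majorant sum is dominated by the sum appearing in \eqref{boundthis}. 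The hypothesis \eqref{boundthis} therefore yields $\Sigma_{\text{main}}(t) \lll 1$.

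Combining the two estimates gives $\Sigma(t) \lll 1$, and Claim \ref{afterstuff} then proves Theorem \ref{bam} for $n=3$. The main obstacle here is essentially absent: the argument is pure bookkeeping, whose only inputs beyond Claim \ref{afterstuff} and Lemma \ref{crude} are the trivial uniform bound $|\hat\rho| \ll 1$ and the elementary containment of the Euclidean ball of radius $\mathcal U(t)$ inside the coordinate-wise box $\Z^3(\mathcal U(t))$.
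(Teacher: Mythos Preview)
Your argument is correct and is essentially the same as the paper's: apply Lemma \ref{crude} to discard the tail, use $|\hat\rho|\ll 1$ to strip the mollifier factors from the remaining terms, and enlarge the Euclidean-norm truncation to the coordinate-wise box $\Z^3(\mathcal U(t))$ by positivity. The paper compresses these steps into a single sentence, while you have spelled them out explicitly; there is no substantive difference.
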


\p {Neglecting integer vectors with vanishing coordinates}
In order to bound the sum on the left-hand side of \eqref{boundthis}, we will need to take advantage of nontrivial bounds of the oscillating integral $I_{k,l}(t)$. We will derive such a bound in Section \ref{sectionofmainproof}, but for technical reasons, in order to use that bound, we need the first two coordinates of $k$ and $l$ to be nonzero. In the present section, we will prove that we can neglect the part of the sum where some of $k_1,k_2,l_1,l_2$ are zero.

We begin by showing that the terms for which both some coordinate of $k$ and some coordinate of $l$ is zero can be neglected:

\begin{Lemma} 
	We have
		\[ \sum_{\substack{ k,l\in \somevectors  \\k_1=l_1=0}} \dfrac{1}{\|k\|^{2}\|l\|^{2}} \abs{I_{k,l}(t)} \lll 1.\]
	The same bound holds if we exchange $k_1$ for any other component of $k$, and $l_1$ for any other component of $l$.
\end{Lemma}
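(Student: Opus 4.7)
The plan is to exploit the fact that the constraint $k_1 = l_1 = 0$ reduces the effective dimensionality of each of the sums over $k$ and $l$ from three to two, trading a factor of $\BOUND$ for a logarithmic factor; the trivial bound $|I_{k,l}(t)| \ll 1$ established just before the statement of Lemma \ref{crude} is already strong enough.

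First I will simply apply the bound $|I_{k,l}(t)| \ll 1$, so that the quantity to estimate becomes
\[
\sum_{\substack{k,l \in \somevectors \\ k_1 = l_1 = 0}} \dfrac{1}{\|k\|^2 \|l\|^2} |I_{k,l}(t)| \ll \left( \sum_{\substack{(k_2, k_3) \in \Z^2 \setminus \{0\} \\ |k_2|, |k_3| \leq \BOUND}} \dfrac{1}{k_2^2 + k_3^2} \right)^2,
\]
since the $k$-sum and $l$-sum decouple and have identical form.

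Next I would bound the inner two-dimensional sum by integral comparison. Grouping integer vectors $(k_2,k_3)$ by dyadic annuli in $\|(k_2,k_3)\|$, the number of lattice points with norm in $[r, r+1)$ is $O(r)$, and summing $r/r^2 = 1/r$ over $1 \leq r \lesssim \BOUND$ gives $O(\log \BOUND)$. Since $\BOUND = 32 t (\log t)^2$, we have $\log \BOUND \ll \log t$, hence the whole double sum is $O((\log t)^2) \lll 1$, as required.

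Finally, the symmetry claim at the end of the lemma statement (that $k_1, l_1$ may be replaced by any other coordinates of $k, l$) follows by the same argument: what mattered was only that one coordinate of $k$ and one coordinate of $l$ are pinned to zero, so that the effective range of each vector is a two-dimensional box of side $\BOUND$; the identity of the pinned coordinate never entered. There is no serious obstacle here — the only mild point is noting that the trivial bound on $I_{k,l}$ suffices, and no nontrivial oscillatory-integral estimate is needed in this degenerate case where both vectors have a vanishing coordinate.
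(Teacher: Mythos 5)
Your proof is correct and follows essentially the same route as the paper: apply the trivial bound $|I_{k,l}(t)|\ll 1$, decouple the $k$- and $l$-sums, and observe that a two-dimensional sum of $\|\cdot\|^{-2}$ over a box of side $\BOUND$ is $O(\log \BOUND) \lll 1$. The only cosmetic difference is that you carry out the integral comparison via dyadic annuli while the paper uses a direct polar integral; both give the same logarithmic bound.
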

\begin{proof}
	We use the trivial bound $\abs{I_{k,l}(t)}\ll 1$ and split the sum into one over $k$ and one over $l$. The sum over $k$ satisfies
	\begin{gather}
		\sum_{\substack{ k\in \somevectors \\k_1=0}} \dfrac{1}{\|k\|^{2}} =
		\sum_{\substack{\abs{k_2},\abs{k_3}\leq \BOUND\\(k_2,k_3)\neq(0,0)}} \dfrac{1}{\|(k_2,k_3)\|^{2}}\ll \int_1^{ \BOUND}\dfrac1{r^2}r\dd r\ll\log(\BOUND)\lll 1
	\end{gather}
	where in the second sum we are only summing over integer vectors in $\Z^2$. The same bound holds for the sum over $l$, so the statement of the lemma follows.
\end{proof}

\pagebreak
We now need a lemma on oscillating integrals; see the corollary of Proposition 2 in chapter VIII in \cite{tjockstein}.

\begin{Lemma}[van der Corput lemma] \label{sslemma}
	Let $\phi,\psi_0:[a,b]\to\R$ be smooth functions defined on some interval $[a,b]$, and suppose that $\phi'$ is monotonic and that there exists a constant $c_0>0$ such that $\phi'(x)\geq c_0$ for all $x$. Then
	\begin{gather}
		\abs{\int_a^b e^{it\phi(x)}\psi_0(x)\dd x} \leq \dfrac C{c_0 t}  \left(\abs{\psi_0(b)} + \int_a^b\abs{\psi_0'(x)}\dd x  \right)
	\end{gather}
	for all $t>0$, where $C$ is an absolute constant. % (actually, $C=2$).
\end{Lemma}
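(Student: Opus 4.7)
The plan is the textbook integration-by-parts argument, exploiting the non-vanishing of $\phi'$ to trade the oscillatory factor $e^{it\phi(x)}$ for a gain of $1/t$. Specifically, I use the identity $e^{it\phi(x)} = \frac{1}{it\phi'(x)}\frac{d}{dx}e^{it\phi(x)}$ to move one derivative off the exponential and onto the amplitude $\psi_0/\phi'$, and then absorb the resulting error terms using the monotonicity of $\phi'$ together with the pointwise bound $\phi'\geq c_0$.

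Performing the integration by parts yields
\[
\int_a^b e^{it\phi(x)}\psi_0(x)\dd x = \left[\frac{\psi_0(x)e^{it\phi(x)}}{it\phi'(x)}\right]_a^b - \frac{1}{it}\int_a^b e^{it\phi(x)}\left(\frac{\psi_0(x)}{\phi'(x)}\right)'\dd x.
\]
The boundary term is bounded in modulus by $(\abs{\psi_0(a)}+\abs{\psi_0(b)})/(c_0 t)$. Expanding $(\psi_0/\phi')' = \psi_0'/\phi' - \psi_0\phi''/(\phi')^2$ inside the remaining integral and estimating crudely, its $L^1$-norm on $[a,b]$ is at most
\[
\frac{1}{c_0}\int_a^b \abs{\psi_0'(x)}\dd x + \sup_{x\in[a,b]}\abs{\psi_0(x)}\cdot\int_a^b \frac{\abs{\phi''(x)}}{\phi'(x)^2}\dd x.
\]

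The key observation is that since $\phi'$ is monotonic, $\phi''$ has constant sign on $[a,b]$, so $\int_a^b \abs{\phi''}/(\phi')^2\dd x = \abs{1/\phi'(a) - 1/\phi'(b)} \leq 1/c_0$. Combining everything, I get a bound of the form
\[
\abs{\int_a^b e^{it\phi(x)}\psi_0(x)\dd x} \leq \frac{C'}{c_0 t}\left(\abs{\psi_0(a)} + \abs{\psi_0(b)} + \sup_{x\in[a,b]}\abs{\psi_0(x)} + \int_a^b \abs{\psi_0'(x)}\dd x\right).
\]
To reach the stated form I then apply the fundamental theorem of calculus in the form $\abs{\psi_0(x)} \leq \abs{\psi_0(b)} + \int_x^b \abs{\psi_0'(s)}\dd s$ for every $x\in[a,b]$, which simultaneously absorbs $\abs{\psi_0(a)}$ and $\sup_{x}\abs{\psi_0(x)}$ into $\abs{\psi_0(b)} + \int_a^b\abs{\psi_0'}\dd x$, yielding the claim with, say, $C=4$.

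I expect no serious obstacle; the computation is routine. The only mild subtlety is the asymmetry of the stated right-hand side (only $\abs{\psi_0(b)}$, not $\abs{\psi_0(a)}$ or a supremum), which is handled by the fundamental-theorem-of-calculus absorption in the final step.
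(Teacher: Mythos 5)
Your proof is correct, and it is the standard integration-by-parts argument for the first-derivative van der Corput lemma. The paper does not actually reproduce a proof of this lemma; it only cites the corollary of Proposition~2 in Chapter~VIII of Stein's \emph{Harmonic Analysis}, and the argument given there is essentially the one you wrote out. The one point worth double-checking, which you handled correctly, is the absorption step: the boundary terms and the $\sup\abs{\psi_0}$ arising from the $\phi''$-term must be converted into the stated form $\abs{\psi_0(b)}+\int_a^b\abs{\psi_0'}$, and your use of $\abs{\psi_0(x)}\leq\abs{\psi_0(b)}+\int_x^b\abs{\psi_0'(s)}\,ds$ does exactly that, with the monotonicity of $\phi'$ ensuring $\int_a^b\abs{\phi''}/(\phi')^2\,dx\leq 1/c_0$. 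Nothing is missing.
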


We prove in the following two lemmas that we can also neglect the terms for which precisely one of $k$ and $l$ has a zero in the first two coordinates.\footnote{This does not imply an analogous statement for the third coordinate because the proof depends on a bound of the integral $I_{k,l}(t)$, and our choice of decomposition $\mathcal{KAN^+}$ of our integration domain is not symmetric in the coordinates.}

\begin{Lemma} \label{k1lemma} We have
		\[ \sum_{\substack{ k,l\in \somevectors \\ k_1=0 \\ l_1,l_2,l_3\neq 0}} \dfrac{1}{\|k\|^{2}\|l\|^{2}} \abs{I_{k,l}(t)} \lll 1.\]
	The same bound holds if we exchange the roles of $k$ and $l$.
\end{Lemma}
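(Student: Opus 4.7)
The plan is to bound the oscillating integral $I_{k,l}(t)$ via the van der Corput lemma (Lemma \ref{sslemma}) in the variable $\eta_2$. The key structural fact when $k_1=0$ is that $\moo k = (N\inv)\T k = (0,\,k_2,\,k_3-\eta_3 k_2)$ is independent of both $\eta_1$ and $\eta_2$, so in the phase $\Phi_{k,l}(AN) = \pm \|k\|_{AN} \pm \|l\|_{AN}$ only the $l$-term carries $\eta_2$-dependence. A direct computation using $\moo l_3 = (\eta_1\eta_3-\eta_2)l_1 - \eta_3 l_2 + l_3$ gives
\[ \partial_{\eta_2}\Phi_{k,l}(AN) = \mp \frac{a_3 l_1 \moo l_3}{\|l\|_{AN}}, \]
where $\moo l_3$ is linear in $\eta_2$ with nonzero slope $-l_1$.

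I would then split $[1,2) = B\cup G$, where $B = \{\eta_2 : |\moo l_3| < \delta\}$ is one subinterval of measure at most $2\delta/|l_1|$, and $G$ is its complement, consisting of at most two subintervals on which $|\partial_{\eta_2}\Phi_{k,l}| \gtrsim |l_1|\delta/\|l\|$ (using $a_3 \asymp 1$ and $\|l\|_{AN}\asymp \|l\|$ on the compact support of $\psi$). Since $\|l\|_{AN} = \|A\inv \moo l\|$ is the Euclidean norm of an affine function of $\eta_2$, it is convex in $\eta_2$, so $\partial_{\eta_2}\Phi_{k,l}$ is monotone and has fixed sign on each component of $G$, making Lemma \ref{sslemma} applicable. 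Combining the trivial bound $\ll \delta/|l_1|$ on $B$ with the van der Corput bound $\ll \|l\|/(t|l_1|\delta)$ on $G$, and optimizing $\delta = \sqrt{\|l\|/t}$, yields
\[ \abs{\int_{[1,2)} e^{2\pi i t \Phi_{k,l}}\psi_{k,l} \, d\eta_2} \ll \frac{\sqrt{\|l\|}}{|l_1|\sqrt{t}}. \]
Integrating trivially over the remaining compact variables $a$, $\eta_1$, $\eta_3$ (which contribute an $O(1)$ factor) gives $|I_{k,l}(t)| \ll \sqrt{\|l\|}/(|l_1|\sqrt{t})$.

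Plugging into the left-hand side of the lemma, the sum factors. The $k$-part is $\sum_{(k_2,k_3)\neq 0,\,|k_i|\leq \BOUND} 1/(k_2^2+k_3^2) \ll \log\BOUND \lll 1$. The $l$-part is $\frac{1}{\sqrt{t}}\sum_{l_1\neq 0,\,|l_1|\leq \BOUND} \frac{1}{|l_1|}\sum_{l_2,l_3\neq 0,\,|l_i|\leq \BOUND} \frac{1}{\|l\|^{3/2}}$; by integral comparison the innermost sum is $\ll \sqrt{\BOUND}$ uniformly in $l_1$, and summing $1/|l_1|$ contributes another $\log\BOUND$. The total is therefore $\ll \log\BOUND \cdot \sqrt{\BOUND/t}\cdot \log\BOUND \lll 1$, as required.

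The main delicate points are the hypotheses of van der Corput: monotonicity of $\partial_{\eta_2}\Phi_{k,l}$, handled by the convexity of $\|l\|_{AN}$ in $\eta_2$; and control of $\int_G |\partial_{\eta_2}\psi_{k,l}|\,d\eta_2$, which follows from $|\partial_{\eta_2}\|l\|_{AN}| = a_3|l_1\moo l_3|/\|l\|_{AN} \ll |l_1|$, whence $|\partial_{\eta_2}(\|l\|/\|l\|_{AN})^2| \ll |l_1|/\|l\| \leq 1$ uniformly in $k$ and $l$.
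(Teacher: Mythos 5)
Your proof is correct, but it takes a genuinely different route from the paper's. The paper applies the van der Corput lemma in the variable $a_1$: differentiating \eqref{fi} with respect to $a_1$, the $k$-term drops out because $\moo k_1 = k_1 = 0$, leaving $\partial_{a_1}\Phi_{k,l} = \pm \moo l_1^2/(2\|l\|_{AN})$. Since $\moo l_1 = l_1 \neq 0$ and $\|l\|_{AN}\asymp\|l\|$, this is automatically bounded below by $\gg |l_1|/\|l\|$ on the entire support of $\psi_1$, with no degeneracy anywhere, and the second $a_1$-derivative has constant sign, giving monotonicity for free. This yields the clean estimate $|I_{k,l}(t)|\ll \|l\|/(t|l_1|)$ in one step. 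You instead differentiate in $\eta_2$, exploiting that $\moo k = (0,k_2,k_3-\eta_3 k_2)$ is entirely $\eta_2$-independent when $k_1=0$; but the resulting phase derivative $\mp a_3 l_1\moo l_3/\|l\|_{AN}$ vanishes where $\moo l_3=0$, which forces the $B\cup G$ split and the optimization $\delta=\sqrt{\|l\|/t}$, and produces the weaker bound $|I_{k,l}(t)|\ll\sqrt{\|l\|}/(|l_1|\sqrt t)$. Both bounds are adequate because $\BOUND\lll t$, so your final summation $(\log\BOUND)^2\sqrt{\BOUND/t}\lll 1$ closes the argument just as the paper's $(\log\BOUND)^2\,\BOUND/t\lll 1$ does. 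In short: the paper picks the variable in which the derivative has no zeros, sidestepping the level-set dissection and optimization; your variable works too but at the cost of those extra steps and a loss of a factor $\sqrt{\|l\|/t}$ in the integral bound. Your monotonicity justification (convexity of $\|l\|_{AN}$ in $\eta_2$, valid precisely because the $k$-contribution to the phase is $\eta_2$-constant) and the control of $\partial_{\eta_2}\psi_{k,l}$ are both correct.
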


\begin{proof}
	Assume that $k_1=0, k\neq(0,0,0)$ and $l_1,l_2,l_3\neq 0$. Consider $\Phi_{k,l}(AN)$, given by equation \eqref{fi}. The partial derivative with respect to $a_1$ is
	\begin{align}
		\ppx{a_1}\Phi_{k,l}(AN)=\pm\dfrac{\moo k_1^2}{2\|k\|_{AN}}  \pm\dfrac{\moo l_1^2}{2\|l\|_{AN} }.
	\end{align}
	Now, since $\moo k_1=k_1=0$ and $\moo l_1=l_1\neq 0$, we get
	\begin{align}
		\ppx{a_1}\Phi_{k,l}(AN)= \pm\dfrac{\moo l_1^2}{2\|l\|_{AN} }\gg \dfrac{l_1^2}{\|l\|}\gg \dfrac{\abs{l_1}}{\|l\|}.
	\end{align}
	Moreover, the second derivative with respect to $a_1$ is
	\begin{gather}
		\left(\ppx{a_1}\right)^2\Phi_{k,l}(AN)= \mp\dfrac{l_1^4}{4\|l\|_{AN}^3 },
	\end{gather}
	which is either always positive or always negative, depending on the sign $\pm$ in the definition of $\Phi_{k,l}$. Thus the map $\phi(a_1)\defeq \Phi_{k,l}(AN)$ for fixed $a_2,a_3$ is such that $\abs{\phi'(a_1)}\gg \abs{l_1}/\norm l$ and $\phi'$ is monotonic. Writing $[b_1,b_2]$ for the support of the characteristic function $\psi_1$, we can apply the van der Corput Lemma \ref{sslemma} to the integral
	\begin{gather}
		\int_{b_1}^{b_2}e^{2\pi i t \Phi_{k,l}(AN)}\psi_0(a_1) \dd{a_1}
	\end{gather}
	where we have defined $\psi_0(a_1)\defeq \frac{\norm k^2\norm l^2}{\norm k_{AN}^2\norm l_{AN}^2}$. The function $\psi_0$ is bounded since $\norm{k}_{AN}\gg\norm k$ and $\norm l_{AN}\gg\norm l$. Its derivative, by the assumption that $\moo k_1=k_1=0, \moo l_1=l_1\neq 0$, is
	\begin{gather}
		\psi_1'(a_1)=\ddx{a_1} \dfrac{\norm k^2\norm l^2}{(a_1\moo k_1^2+a_2\moo k_2^2+a_3\moo k_3^2)(a_1\moo l_1^2+a_2\moo l_2^2+a_3\moo l_3^2)} \\
		= -\dfrac{\norm k^2\norm l^2 l_1^2}{(a_1\moo k_1^2+a_2\moo k_2^2+a_3\moo k_3^2)(a_1\moo l_1^2+a_2\moo l_2^2+a_3\moo l_3^2)^2} 
		= -\dfrac{\norm k^2\norm l^2}{ \norm k_{AN}^2\norm l_{AN}^2 } \dfrac{l_1^2}{\norm l_{AN}^2},
	\end{gather}
	which is also bounded. Thus the van der Corput Lemma gives us the bound
	\begin{gather}
		\abs{\int_{\R} e^{2\pi i t \Phi_{k,l}(AN)}\psi_{k,l}(AN)\dd{a_1}} \ll \dfrac1t\dfrac{\norm l}{\abs{l_1}},
	\end{gather}
	where the asymptotic constant is independent of $k,l$. Integrating in the rest of the variables yields by compactness
	\begin{gather}
		\abs{I_{k,l}(t)}\ll \int_{[1,2)^3}\int_{\R^2} \dfrac1t\dfrac{\norm l}{\abs{l_1}} \psi_2(a_2)\psi_3(a_3)\dd{a_2}\dd{a_2}\dd\eta \ll \dfrac1t\dfrac{\norm l}{\abs{l_1}}.
	\end{gather}

	Using this bound, it now follows that
		\begin{gather}
			\sum_{\substack{ k,l\in \somevectors \\ k_1=0 \\ l_1,l_2,l_3\neq 0}} \dfrac{1}{\|k\|^{2}\|l\|^{2}} \abs{I_{k,l}(t)} \ll 
		\dfrac1t \sum_{\substack{ k\in \somevectors \\ k_1=0}} \dfrac1{\|k\|^{2}} \sum_{\substack{l\in \somevectors \\ l_1,l_2,l_3\neq 0  }} \dfrac1{\|l\|\abs{l_1}}.
		\end{gather}
	The sum over $k$ has logarithmic behavior in $\BOUND$ since we are summing over a two-dimensional space. We will split the sum over $l$ into one over $l_1$, and one over $(l_2,l_3)$. We have $\|l\|\geq \|(0,l_2,l_3)\|\geq\|(l_2,l_3)\|$, so
		\begin{gather}
			\sum_{ \substack{ k,l\in \somevectors\\ k_1=0\\ l_1,l_2,l_3\neq 0}} \dfrac{1}{\|k\|^{2}\|l\|^{2}} \abs{I_{k,l}(t)} \lll
		\dfrac1t \sum_{1\leq |l_1|\leq \BOUND}\abs{l_1}^{-1} \sum_{1\leq |l_2|,|l_3|\leq \BOUND}\|(l_2,l_3)\|^{-1} \notag \\
		\ll \dfrac1t  \int_1^{\BOUND}\dfrac1x\dd x \int_1^{\BOUND} \dfrac1r r\dd r \ll
		\dfrac1t \cdot \log(\BOUND)\cdot \BOUND\lll 1. \label{k1}
		\end{gather}
	This completes the proof that the sum over $k_1=0$ can be neglected.
\end{proof}

\begin{Lemma} \label{k2lemma}
	We have
		\[ \sum_{\substack{  k,l\in \somevectors \\ k_2=0 \\ l_1,l_2,l_3\neq 0}} \dfrac{1}{\|k\|^{2}\|l\|^{2}} \abs{I_{k,l}(t)} \lll 1.\]
	The same bound holds if we exchange the roles of $k$ and $l$.
\end{Lemma}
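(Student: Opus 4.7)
The plan is to mirror Lemma~\ref{k1lemma}: bound $\abs{I_{k,l}(t)}$ via Lemma~\ref{sslemma} applied in a single integration variable, then sum. The subcase $k_1=0$ is already contained in Lemma~\ref{k1lemma}, so I restrict to $k_1\neq 0$. With $k_2=0$ and $k_1\neq 0$ we have $\moo k_1=k_1$, $\moo k_2=-\eta_1 k_1\neq 0$, and $\moo k_3=(\eta_1\eta_3-\eta_2)k_1+k_3$, so the clean vanishing exploited in Lemma~\ref{k1lemma} is no longer available. I split into the four sign choices in $\Phi_{k,l}(AN)=\pm\norm k_{AN}\pm\norm l_{AN}$.

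When the two signs in $\Phi_{k,l}$ agree, the $a_2$-derivative
\[\ppx{a_2}\Phi_{k,l}(AN)=\pm\frac{\moo k_2^2}{2\norm k_{AN}}\pm\frac{\moo l_2^2}{2\norm l_{AN}}\]
has constant sign and absolute value $\gg k_1^2/\norm k$, since $\abs{\moo k_2}=\eta_1\abs{k_1}\geq\abs{k_1}$; its own derivative has the opposite constant sign, so $\partial_{a_2}\Phi_{k,l}$ is monotonic. Lemma~\ref{sslemma} then yields $\abs{I_{k,l}(t)}\ll(1/t)\norm k/k_1^2$, and summation in the style of~\eqref{k1} bounds the contribution of these sign cases by $\lll 1$.

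The main obstacle is the opposite-sign cases, where $\partial_{a_2}\Phi_{k,l}$ may genuinely vanish. I would exploit the identity $\moo k_2=-\eta_1\moo k_1$ (valid precisely because $k_2=0$) via the unit-Jacobian change of variables $(a_1,a_2)\mapsto(b_1,b_2)\defeq(a_1+\eta_1^2 a_2,\,a_2)$. After this substitution $\norm k_{AN}^2=b_1 k_1^2+a_3\moo k_3^2$ is independent of $b_2$, so the $b_2$-derivative of the phase collapses to a single-term expression involving only $l$:
\[\ppx{b_2}\Phi_{k,l}(AN)=\pm\frac{M}{2\norm l_{AN}},\qquad M\defeq\moo l_2^2-\eta_1^2 l_1^2=l_2\bigl(l_2-2\eta_1 l_1\bigr).\]
This has constant sign and is monotonic in $b_2$ (since $\norm l_{AN}^2$ depends linearly on $b_2$), with $\abs{\partial_{b_2}\Phi_{k,l}}\gg\abs M/\norm l$.

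To absorb the remaining zero locus $\eta_1=l_2/(2l_1)$, split the $\eta_1$-integration at the threshold $\abs M=\delta_M$. On $\{\abs M>\delta_M\}$ Lemma~\ref{sslemma} applied in $b_2$ gives the bound $\norm l/(t\abs M)$, which after the change of variable $v\defeq l_2-2\eta_1 l_1$ (so $d\eta_1=dv/(2\abs{l_1})$) integrates to $\lll\norm l/(t\abs{l_1}\abs{l_2})$; on the sliver $\{\abs M<\delta_M\}$ of $\eta_1$-measure $\delta_M/(\abs{l_1}\abs{l_2})$ the trivial bound $\ll 1$ suffices. Optimizing $\delta_M=\norm l/t$ yields $\abs{I_{k,l}(t)}\lll\norm l/(t\abs{l_1}\abs{l_2})$. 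Substituting and combining the two-dimensional estimate $\sum_{k_2=0,\,k_1\neq 0}1/\norm k^2\lll 1$ with $\sum_l 1/(\norm l\abs{l_1}\abs{l_2})\lll 1$ (three logarithmic one-dimensional sums) gives a total contribution $\lll 1/t\lll 1$, as required.
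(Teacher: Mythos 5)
Correct, and essentially the same approach as the paper: the key step in both is the directional derivative of $\Phi_{k,l}$ in direction $(-\eta_1^2,1,0)$ --- your shear $(a_1,a_2)\mapsto(a_1+\eta_1^2 a_2,\,a_2)$ --- which annihilates the $k$-contribution to the phase derivative because $k_2=0$ forces $\moo k_2^2-\eta_1^2\moo k_1^2=0$, leaving $l_2(l_2-2\eta_1 l_1)$. The paper applies this uniformly to all four sign choices (so your separate same-sign subcase is superfluous, since the $k$-term vanishes regardless) and cuts the $\eta_1$-integration at $\abs{l_2-2\eta_1 l_1}\geq 1$ rather than at $\abs{M}\geq\norm l/t$, but both variants yield the stated bound.
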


\begin{proof}
	Assume that $k_2=0, k\neq(0,0,0)$ and $l_1,l_2,l_3\neq 0$. 	We write
	\begin{gather}
		\sum_{\substack{ k,l\in \somevectors \\ k_2=0 \\ l_1,l_2,l_3\neq 0}} \dfrac{1}{\|k\|^{2}\|l\|^{2}} \abs{I_{k,l}(t)}=\\
		\int_{[1,2)^3} \sum_{\substack{ k,l\in \somevectors \\ k_2=0 \\ l_1,l_2,l_3\neq 0}} \dfrac{1}{\|k\|^{2}\|l\|^{2}} \abs{\int_{\R^3}e^{2\pi i\Phi_{k,l}(AN)}\psi_{k,l}(AN)\dd a}\dd \eta. \label{acrid}
	\end{gather}
	We will split the latter sum into two parts: one in which $\abs{l_2-2\eta_1 l_1}\geq 1$, and one in which $\abs{l_2-2\eta_1 l_1}<1$. We will bound the sum over $\abs{l_2-2\eta_1 l_1}\geq 1$ by mimicking the proof of Lemma \ref{k1lemma}, with the difference that we consider instead  the directional derivative of $\Phi_{k,l}(AN)$ with respect to the direction $(-\eta_1^2,1,0)$.

	We deal first with the part of the sum \eqref{acrid} with $\abs{l_2-2\eta_1 l_1}\geq 1$.
	We change the order of integration inside the integral $I_{k,l}(t)$ such that the innermost integral is taken with respect to $a_2$, and perform a one-variable substitution from $a_2$ to $u\defeq -\eta_1^2a_1+a_2$ inside this integral. 
	Recalling the expression \eqref{fi}, it now follows, since $k_2=0$, that
	\begin{gather}
		\ppx u \Phi_{k,l}(AN)=
				-\eta_1^2 \ppx{a_1}\Phi_{k,l}(AN)+\ppx{a_2}\Phi_{k,l}(AN)
		=\pm\dfrac{-\eta_1^2\moo k_1^2+\moo k_2^2}{2\|k\|_{AN}} \pm\dfrac{-\eta_1^2\moo l_1^2+\moo l_2^2}{2\|l\|_{AN} }\\
		=\pm\dfrac{-\eta_1^2 k_1^2+(-\eta_1 k_1+k_2)^2}{2\|k\|_{AN} } \pm\dfrac{-\eta_1^2 l_1^2+(-\eta_1 l_1+l_2)^2}{2\|l\|_{AN} }\\ 
		=\pm\dfrac{-\eta_1^2 l_1^2+(-\eta_1 l_1+l_2)^2}{2\|l\|_{AN} }=\pm\dfrac{-2\eta_1 l_1 l_2+l_2^2}{2\|l\|_{AN} }
		=\pm\dfrac{l_2(l_2-2\eta_1 l_1)}{2\|l\|_{AN}}
	\end{gather}
	and
	\begin{gather}
		\left(\ppx u\right)^2 \Phi_{k,l}(AN) =  \mp\dfrac{( l_2(l_2-2\eta_1 l_1))^2}{4\|l\|_{AN}^3}.
	\end{gather}
	Whenever $\abs{l_2-2\eta_1 l_1}\geq 1$ holds, we get a bound of the form $\abs{\ppx u\Phi_{k,l}} \gg \abs{l_2}/\|l\|$ with $u\mapsto \ppx u \Phi_{k,l}$ monotonic.
	Since $\psi_1\psi_2$ is the characteristic function of a rectangle, it follows that the support of $u\mapsto \psi_{k,l}(AN)$ is some interval $[b_1,b_2]$, which is bounded in length (independent of $k$ and $l$). The function $u\mapsto \psi_{k,l}(AN)$ restricted to the interval $[b_1,b_2]$ coincides with the function $u\mapsto \frac{\norm k^2\norm l^2}{\norm k_{AN}^2\norm l_{AN}^2}$ because $\psi_1\psi_2\psi_3$ is a characteristic function. The function $u\mapsto \psi_{k,l}(AN)$  is bounded, and so is
	\begin{gather}
		\ppx u \psi_{k,l}(AN)=\ppx u\dfrac{\norm k^2\norm l^2}{\norm k_{AN}^2\norm l_{AN}^2}=\\
		 	-\dfrac{\norm k^2\norm l^2 }{2\norm k_{AN}^2\norm l_{AN}^2} \cdot \dfrac{(-\eta_1^2\moo k_1^2+\moo k_2^2)}{\norm k_{AN}^2} 
		 	-\dfrac{\norm k^2\norm l^2 }{2\norm k_{AN}^2\norm l_{AN}^2} \cdot \dfrac{(-\eta_1^2\moo l_1^2+\moo l_2^2)}{\norm l_{AN}^2} 
	\end{gather}
	on the interval $[b_1,b_2]$
	since $\left|-\eta_1^2\moo l_1^2+\moo l_2^2\right|\ll \|\moo l\|^2\ll \norm l_{AN}^2$ and $-\eta_1^2\moo k_1^2+\moo k_2^2=0$.
	Thus, whenever $\abs{l_2-2\eta_1 l_1}\geq 1$ holds, the van der Corput Lemma \ref{sslemma} gives us the bound
	\begin{gather}
		 \abs{\int_{\R}e^{2\pi i t \Phi_{k,l}(AN)}\psi_{k,l}(AN)\dd {a_2}} =
		\abs{\int_{\R}e^{2\pi i t \Phi_{k,l}(AN)}\psi_{k,l}(AN)\dd u} \ll \dfrac1t\dfrac{\norm l}{\abs{l_2}},
	\end{gather}
	and estimating trivially in the remaining variables $a_1,a_3$ yields
	\begin{gather}
		\abs{\int_{\R^3}e^{2\pi i t\Phi_{k,l}(AN)}\psi_{k,l}(AN)\dd a} \ll \dfrac1t\dfrac{\norm l}{\abs{l_2}}. \label{lasse}
	\end{gather}

	This bound yields
	\begin{gather}
		\int_{[1,2)^3} \sum_{\substack{ k,l\in \somevectors \\ k_2=0 \\ l_1,l_2,l_3\neq 0 \\ \abs{l_2-2\eta_1 l_1}\geq 1 }} \dfrac{1}{\|k\|^{2}\|l\|^{2}} \abs{\int_{\R^3}e^{2\pi i\Phi_{k,l}(AN)}\psi_{k,l}(AN)\dd a}\dd \eta \ll \\
		\dfrac1t \int_{[1,2)^3} \sum_{\substack{ k,l\in \somevectors \\ k_2=0 \\ l_1,l_2,l_3\neq 0 \\ \abs{l_2-2\eta_1 l_1}\geq 1 }} \dfrac{1}{\|k\|^{2}\|l\|^{1}\abs{l_2}}\dd \eta \leq 
		\dfrac1t \sum_{\substack{ k,l\in \somevectors \\ k_2=0 \\ l_1,l_2,l_3\neq 0}} \dfrac{1}{\|k\|^{2}\|l\| \abs{l_2}}\lll 1,
	\end{gather}
	where the last bound is completely analogous to the bound \eqref{k1}.

	It remains to bound the part of the sum \eqref{acrid} with $\abs{l_2-2\eta_1 l_1}<1$.
	When $\abs{l_2-2\eta_1 l_1}<1$, there are at most two values that $l_2$ may assume when $\eta_1,l_1$ are held fixed, and using $\|(l_1,l_2,l_3)\|\geq\|(l_1,0,l_3)\|=\|(l_1,l_3)\|$, we get
	\begin{gather}
		\int_{[1,2)^3} \sum_{\substack{ k,l\in \somevectors \\ k_2=0 \\ l_1,l_2,l_3\neq 0 \\ \abs{l_2-2\eta_1 l_1}< 1 }} \dfrac{1}{\|k\|^{2}\|l\|^{2}} \abs{\int_{\R^3}e^{2\pi i\Phi_{k,l}(AN)}\psi_{k,l}(AN)\dd a}\dd \eta \ll \\
		\int_{[1,2)^3} \sum_{\substack{ k,l\in \somevectors \\ k_2=0 \\ l_1,l_2,l_3\neq 0 \\ \abs{l_2-2\eta_1 l_1}< 1 }} \dfrac{1}{\|k\|^{2}\|l\|^{2}} \dd \eta \ll \\
		\int_{[1,2)^3} \sum_{\substack{ k,l\in \somevectors \\ k_2=0 \\ l_1,l_2,l_3\neq 0 \\ \abs{l_2-2\eta_1 l_1}< 1 }} \dfrac{1}{\|(k_1,k_3)\|^{2}\|(l_1,l_3)\|^{2}} \dd \eta \ll \\
		\sum_{1\leq \abs{k_1},\abs{k_3}\leq \BOUND}
		\sum_{1\leq \abs{l_1},\abs{l_3}\leq \BOUND}
		\dfrac{1}{\|(k_1,k_3)\|^{2}\|(l_1,l_3)\|^{2}} \lll 1,
	\end{gather}
	and we are done.%\comment{kanske byt mot $<1/2$}
\end{proof}

Putting the lemmas together, we have thus demonstrated:
\begin{Claim} \label{beforefinalsum}
	To prove Theorem \ref{bam} for $n=3$ it suffices to prove that
	\begin{gather}
		\sum_{\substack{ k,l\in \somevectors \\ k_1,k_2,l_1,l_2\neq 0}} \dfrac{1}{\|k\|^{2}\|l\|^{2}} \abs{I_{k,l}(t)} \lll 1,
	\end{gather}
	where $k_3,l_3$ may assume both zero and nonzero values.
\end{Claim}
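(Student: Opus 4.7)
The plan is to argue that Claim \ref{beforefinalsum} follows essentially by bookkeeping from the preceding three lemmas together with the Claim established just before them (which reduces the problem to summing over $k,l \in \somevectors$). Specifically, denote the ``bad set''
\[
B \defeq \{(k,l) \in \somevectors \times \somevectors : k_1 = 0 \text{ or } k_2 = 0 \text{ or } l_1 = 0 \text{ or } l_2 = 0\}.
\]
It suffices to show that the sum restricted to $B$ is $\lll 1$, for then subtracting this from the full sum over $\somevectors \times \somevectors$ (which is already known to be $\lll 1$) leaves precisely the sum over $k_1, k_2, l_1, l_2 \neq 0$.

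To bound the sum over $B$, I would first dominate it via a union bound by the four sub-sums indexed by $\{k_1 = 0\}$, $\{k_2 = 0\}$, $\{l_1 = 0\}$, and $\{l_2 = 0\}$; since every term is non-negative, this overcounting is harmless. For each of these four sub-sums, I would then split further according to whether the ``other'' vector has a vanishing coordinate. In the case $\{k_1 = 0\}$, if some coordinate of $l$ is also zero, I would invoke the first lemma of the section on neglecting integer vectors with vanishing coordinates, which explicitly allows any pair of coordinates to be zero; this disposes of the three possibilities $l_1 = 0$, $l_2 = 0$, $l_3 = 0$ simultaneously. If instead $l_1, l_2, l_3$ are all nonzero, Lemma \ref{k1lemma} applies directly. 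The same strategy handles $\{k_2 = 0\}$, using Lemma \ref{k2lemma} in the place of Lemma \ref{k1lemma}. For $\{l_1 = 0\}$ and $\{l_2 = 0\}$, I would apply the symmetric versions of Lemmas \ref{k1lemma} and \ref{k2lemma} guaranteed by the parenthetical in each of their statements.

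Summing over these finitely many sub-cases, each contributing $\lll 1$, and using transitivity of $\lll$, I conclude that the restriction of the sum to $B$ is $\lll 1$, which is what was needed.

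The main obstacle here is essentially non-existent: every sub-case matches an already-proven lemma, so the argument is entirely a matter of covering $B$ by the cases the lemmas handle. The only point that requires a bit of attention is verifying that the union of the cases $\{k_i = 0\}$ and $\{l_j = 0\}$ (for $i, j \in \{1,2\}$), together with the sub-case splits by zero-coordinates of the other vector, really exhausts $B$. Since the lemma on ``both vectors have a zero coordinate'' covers every $(i,j) \in \{1,2,3\}^2$ while Lemmas \ref{k1lemma}--\ref{k2lemma} and their symmetric counterparts cover the remaining cases, this exhaustion is immediate.
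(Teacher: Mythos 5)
Your proposal is correct and takes the same approach the paper compresses into ``Putting the lemmas together'': cover the complement of $\{k_1,k_2,l_1,l_2\neq 0\}$ by the four events $\{k_1=0\},\{k_2=0\},\{l_1=0\},\{l_2=0\}$, and in each case split according to whether the other vector also has a vanishing coordinate (handled by the first lemma) or not (handled by Lemma \ref{k1lemma}, Lemma \ref{k2lemma}, or their symmetric versions). One small correction of phrasing in your first paragraph: the parenthetical ``(which is already known to be $\lll 1$)'' is backwards --- the full sum over $\somevectors\times\somevectors$ is not already known to be $\lll 1$; rather, the preceding Claim says that bounding it by $\lll 1$ would suffice for Theorem \ref{bam}, so the correct inference is that, since the bad-set contribution is $\lll 1$, a bound of $\lll 1$ on the restricted sum yields the required bound on the full sum.
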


Proving the inequality in Claim \ref{beforefinalsum} is the heart of the proof of Theorem \ref{bam}; we will do this in the next section.

\p {Concluding the proof of Theorem \ref{bam}} \label{sectionofmainproof}

Recall that $\moo k = (N\inv)\T k, \moo l=(N\inv)\T l$. We now define $\gamma\defeq -\eta_1$. Then we have $\moo k_1=k_1, \moo k_2=\gamma k_1+k_2$ and $\moo l_1=l_1, \moo l_2=\gamma l_1+l_2$, and thus
\begin{gather}
	\begin{split}
		\moo k_1\moo l_2-\moo k_2\moo l_1&=  k_1 l_2- k_2 l_1,\\
		\moo k_1\moo l_2+\moo k_2\moo l_1&= k_1 l_2+ k_2 l_1+2\gamma k_1l_1.
	\end{split} \label{klkl}
\end{gather}
The crucial ingredient in the proof of the inequality in Claim \ref{beforefinalsum} is the following inequality, and the uniformity of the bound is essential, as we will apply it to all terms of an infinite sum.

\begin{Lemma} \label{hessianlemma}
	Assume that $\abs{\moo k_1^2\moo l_2^2-\moo k_2^2\moo l_1^2}\neq 0$. Then
	\begin{gather}
		\abs{\int_{\R^3} e^{2\pi i t \Phi_{k,l}(AN)} \psi_{k,l}(AN)\dd a } \leq \dfrac{C}t \dfrac{\norm{k}^{3/2}\norm{l}^{3/2}}{\abs{\moo k_1^2\moo l_2^2-\moo k_2^2\moo l_1^2}}
	\end{gather}
	for all $t>0$, where $C$ is a constant which does not depend on $k,l,N$ (but which does depend on the already fixed cutoff function $\psi$).
\end{Lemma}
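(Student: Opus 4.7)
My plan is to apply the one-dimensional van der Corput lemma (Lemma \ref{sslemma}) along a carefully chosen direction in the $(a_1,a_2)$-plane, then to run the identical argument with the roles of $k$ and $l$ interchanged, and finally to combine the two resulting asymmetric bounds via the elementary inequality $\min(A,B)\leq\sqrt{AB}$. The $a_3$-integration plays only a passive bystander role since its range is $\ll 1$.

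Introduce $D_k\defeq \moo k_1^2\,\partial_{a_2} - \moo k_2^2\,\partial_{a_1}$ and abbreviate $\Delta\defeq \moo k_1^2\moo l_2^2 - \moo k_2^2\moo l_1^2$. A direct calculation from \eqref{fi}, using $D_k\|k\|_{AN}^2=0$ and $D_k\|l\|_{AN}^2=\Delta$, yields
\[
D_k\Phi_{k,l}=\pm\dfrac{\Delta}{2\|l\|_{AN}},\qquad D_k^2\Phi_{k,l}=\mp\dfrac{\Delta^2}{4\|l\|_{AN}^3},
\]
and $D_k\phi_{k,l}= -\|k\|^2\|l\|^2\,\Delta/(\|k\|_{AN}^2\|l\|_{AN}^4)$, where $\phi_{k,l}\defeq \|k\|^2\|l\|^2/(\|k\|_{AN}^2\|l\|_{AN}^2)$ denotes the smooth factor of $\psi_{k,l}$. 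The signs of $D_k^2\Phi_{k,l}$ and $D_k\phi_{k,l}$ are constant along every line in direction $D_k$, so both $D_k\Phi_{k,l}$ and $\phi_{k,l}$ are monotonic along such lines; combined with $\|l\|_{AN}\ll\|l\|$ on the support of $\psi_1\psi_2\psi_3$, this gives the uniform lower bound $|D_k\Phi_{k,l}|\gg|\Delta|/\|l\|$. Now change variables from $(a_1,a_2)$ to $(s,r)$ by $a_1 = r - \moo k_2^2 s$, $a_2 = \moo k_1^2 s$, which is admissible because $\moo k_1 = k_1\neq 0$, with Jacobian $\moo k_1^2$ and $\partial_s = D_k$. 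For each fixed $(r,a_3)$, apply Lemma \ref{sslemma} in $s$: monotonicity of $\phi_{k,l}$ together with $|\phi_{k,l}|\ll 1$ bounds the boundary and total-variation terms by $\ll 1$, giving an inner $s$-estimate of $\ll \|l\|/(t|\Delta|)$. The $r$-range on which the line meets the support of $\psi_1\psi_2$ has length $\ll (\moo k_1^2+\moo k_2^2)/\moo k_1^2 \ll \|k\|^2/\moo k_1^2$, so multiplying by the Jacobian $\moo k_1^2$ and integrating over $r$ and the bounded $a_3$-interval yields $|I_{k,l}(t)|\ll \|k\|^2\|l\|/(t|\Delta|)$.

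The identical argument with $D_l\defeq \moo l_1^2\partial_{a_2} - \moo l_2^2\partial_{a_1}$ in place of $D_k$ (legitimate because $l_1\neq 0$) produces the mirror bound $|I_{k,l}(t)|\ll \|k\|\|l\|^2/(t|\Delta|)$, and $\min(A,B)\leq\sqrt{AB}$ then gives the claimed inequality. The main technical obstacle will be the uniform bookkeeping of implied constants in $k,l,N$: every estimate above must use only the already-established equivalences $\|k\|_{AN}\ll\|k\|\ll\|k\|_{AN}$ and $\|l\|_{AN}\ll\|l\|\ll\|l\|_{AN}$, together with the linear bounds $|\moo k_i|,|\moo l_i|\ll\|k\|,\|l\|$, all of which hold on the support of $\psi_1\psi_2\psi_3$ because $\eta_i\in[1,2)$ and the $a_i$-support is a compact subset of $(0,\infty)^3$.
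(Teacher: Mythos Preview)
Your argument is correct and is essentially a re-parametrised version of the paper's proof. The paper performs the two-dimensional substitution $(a_1,a_2)\mapsto (x,y)=(\|k\|_{AN}^2,\|l\|_{AN}^2)$, which produces the Jacobian $1/|\Delta|$ directly, and then integrates by parts in $x$ using the explicit antiderivative of $e^{2\pi it\sqrt x}$ while estimating trivially in $y$; assuming without loss of generality $\|k\|\geq\|l\|$ yields the asymmetric bound $\|k\|\|l\|^2/(t|\Delta|)$ and hence the stated symmetric one. Your direction $D_k$ is, up to the scalar $\Delta$, precisely $\partial_y$ in the paper's coordinates, so your van der Corput step is the same integration-by-parts mechanism applied along the complementary variable; the factor $1/|\Delta|$ then reappears not as a Jacobian but through the lower bound $|D_k\Phi_{k,l}|\gg|\Delta|/\|l\|$, and the trivial $r$-integration together with your Jacobian $\moo k_1^2$ plays the role of the paper's trivial $x$-integration. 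Your symmetrisation via $\min(A,B)\leq\sqrt{AB}$ is equivalent to the paper's WLOG reduction.

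One small point: the lemma as stated assumes only $\Delta\neq0$, not $k_1,l_1\neq0$, and your parametrisation $a_2=\moo k_1^2 s$ breaks down when $k_1=0$. The paper's two-dimensional substitution avoids this because it is invertible precisely when $\Delta\neq0$. Your version is easily repaired by parametrising instead through whichever of $\moo k_1^2,\moo k_2^2$ is nonzero (at least one must be, since $\Delta\neq0$), and in any case the lemma is only invoked after Claim~\ref{beforefinalsum}, where $k_1,k_2,l_1,l_2\neq0$ is already in force.
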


We will postpone the proof of Lemma \ref{hessianlemma} until we need to use it; 
Lemma \ref{hessianlemma} compels us to split the sum in Claim \ref{beforefinalsum} into parts as follows. We write
	\begin{gather}
		\sum_{\substack{ k,l\in \somevectors \\k_1,k_2,l_1,l_2\neq 0  }} \dfrac{1}{\|k\|^{2}\|l\|^{2}} \abs{I_{k,l}(t)} \leq \\
		\int_{[1,2)^3} \left(\sum\nolimits_1+\sum\nolimits_2+\sum\nolimits_3\right) \dfrac{1}{\|k\|^{2}\|l\|^{2}} \abs{\int_{\R^3}e^{2\pi i t\Phi_{k,l}(AN)}\psi_{k,l}(AN)\dd a}\dd \eta, \label{thirdsum}
	\end{gather}
where $\sum_1$ is the sum over $\abs{k_1l_2+k_2l_1+2\gamma k_1 l_1}<1/2$; $\sum_2$ is the sum over $k_1l_2-k_2l_1=0$; $\sum_3$ is the sum over $\abs{k_1l_2+k_2l_1+2\gamma k_1 l_1}\geq 1/2$ and $k_1l_2-k_2l_1\neq 0$, and where all sums range over $k,l\in\somevectors$ such that $k_1,k_2,l_1,l_2\neq 0$.

The following lemma shows that we may neglect the sums $\sum_1$ and $\sum_2$:

\begin{Lemma} \label{sum1and2}
	For any $\abs\gamma\geq 1$, we have
	\[ \sum_{\substack{ k,l\in \somevectors \\ k_1,k_2,l_1,l_2\neq 0\\ \abs{k_1l_2+k_2l_1+2\gamma k_1 l_1}< 1/2}} \dfrac1{\|k\|^2\|l\|^2} \lll 1, \]
	where the asymptotic constant is independent of $\gamma$, and
	\[ \sum_{\substack{ k,l\in \somevectors \\ k_1,k_2,l_1,l_2\neq 0\\ \abs{k_1l_2-k_2l_1}=0}} \dfrac1{\|k\|^2\|l\|^2} \lll 1. \]
\end{Lemma}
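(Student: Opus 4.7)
The plan is to handle both sums by first summing out $k_3, l_3$, reducing to four-variable sums. For any nonzero $(k_1, k_2) \in \Z^2$, integral comparison gives $\sum_{|k_3|\leq \mathcal{U}(t)} 1/\|k\|^2 \ll 1/\|(k_1,k_2)\|$, and similarly for $l$. So it suffices to bound the corresponding four-variable sum $\sum 1/(\|(k_1,k_2)\|\,\|(l_1,l_2)\|)$ over $(k_1, k_2, l_1, l_2) \in (\Z\setminus\{0\})^4$ with $|k_i|, |l_j| \leq \mathcal{U}(t)$ and the relevant constraint.

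For the second identity, the constraint $k_1 l_2 - k_2 l_1 = 0$ means that $(k_1, k_2)$ and $(l_1, l_2)$ are parallel nonzero vectors. Every such pair factors uniquely as $(k_1, k_2) = v(p, q)$, $(l_1, l_2) = w(p, q)$ for some primitive $(p, q) \in \Z^2$ with $p, q \neq 0$, $\gcd(p, q) = 1$ (fixing, say, $p>0$), and nonzero integers $v, w$. The four-variable sum then factors as $\sum_{(p,q)} \|(p,q)\|^{-2} \cdot \sum_v |v|^{-1} \cdot \sum_w |w|^{-1}$. The sums over $v$ and $w$ range over $|v|, |w| \leq \mathcal{U}(t)/\max(|p|,|q|)$ and each contribute $\ll \log \mathcal{U}(t)$, while $\sum_{(p,q)\neq 0,\ |p|,|q|\leq \mathcal{U}(t)} \|(p,q)\|^{-2} \ll \log \mathcal{U}(t)$ by integral comparison in $\R^2$. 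The total is $\ll (\log \mathcal{U}(t))^3 \lll 1$.

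For the first identity, I perform a dyadic decomposition in $\|(k_1, k_2)\| \in [R, 2R)$ and $\|(l_1, l_2)\| \in [S, 2S)$ with $R, S$ powers of $2$ ranging up to $\mathcal{U}(t)$; there are $O((\log \mathcal{U}(t))^2)$ such cells. The key observation is that $k_1 l_2 + k_2 l_1 \in \Z$, so the strip condition forces $k_1 l_2 + k_2 l_1 = m_0$ where $m_0 = m_0(k_1, l_1, \gamma)$ is the unique integer within $1/2$ of $-2\gamma k_1 l_1$. For each fixed nonzero $(k_1, l_1)$, the integer solutions $(k_2, l_2)$ form an arithmetic progression along the line $k_1 l_2 + k_2 l_1 = m_0$ with Euclidean spacing $\sqrt{k_1^2 + l_1^2}/d$, where $d = \gcd(k_1, l_1)$. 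Comparing the length of the intersection of this line with $[-2R, 2R]\times[-2S, 2S]$ to the spacing bounds the number of valid $(k_2, l_2)$ by $\ll 1 + d\min(R/|k_1|, S/|l_1|)$. Summing over $(k_1, l_1)$ with $|k_1|\leq 2R$, $|l_1|\leq 2S$ via the inequality $\min(R/|k_1|, S/|l_1|) \leq \sqrt{RS/|k_1 l_1|}$ and the substitution $k_1 = da$, $l_1 = db$ with $\gcd(a,b) = 1$ yields a count of $\ll RS\log \mathcal{U}(t)$ valid four-tuples per cell. Each cell thus contributes $\ll (RS)^{-1}\cdot RS\log \mathcal{U}(t) = \log \mathcal{U}(t)$, and summing over the $O((\log \mathcal{U}(t))^2)$ cells gives the bound $\lll 1$, uniformly in $\gamma$.

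The main obstacle is this counting step: a dyadic cell in $(k_1, k_2, l_1, l_2)$-space contains $\asymp R^2 S^2$ four-tuples, and the strip constraint must impose essentially one equation, gaining a factor of roughly $RS$. The clean way to extract this saving is to exploit that $k_1 l_2 + k_2 l_1$ is integer-valued, reducing the problem to counting integer points on a family of lines with $\gcd$-controlled spacing; merely treating the constraint as a strip of width $1/\sqrt{k_1^2+l_1^2}$ would leak too many logs and break the uniform-in-$\gamma$ bound.
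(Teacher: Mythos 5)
Your proof is correct, but it takes a genuinely different route from the paper's. Both arguments reduce to a four-variable sum by summing out $k_3,l_3$, use the integrality of $k_1l_2+k_2l_1$ to convert the strip condition into a single linear Diophantine equation, and exploit the gcd $d=\gcd(k_1,l_1)$ to control the spacing of solutions along the line. The divergence is in how the weighted sum is then estimated. The paper fixes $(a,b)=(k_1,l_1)/d$ and bounds each summand directly by mapping a solution $(x,y)$ on the line $bx-ay=c$ to a nearby point $m(a,b)$ on the parallel line through the origin, replacing $1/\|(x,y)\|$ by $1/(|m|\,\|(a,b)\|)$; the $m=0$ solution is awkward and is bounded separately by the distance $D=|c|/\|(a,b)\|$ from the origin to the line, which is where the hypothesis $|\gamma|\ge 1$ is genuinely used (for $\gamma=0$ the $m=0$ case is simply absent). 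You instead dyadically decompose in $\|(k_1,k_2)\|\in[R,2R)$ and $\|(l_1,l_2)\|\in[S,2S)$, which freezes the weight to $\asymp 1/(RS)$ and turns the problem into a lattice-point count per cell. This sidesteps the $m=0$ issue entirely --- the cell boundaries $R,S\ge 1$ already keep the weight bounded --- so your argument for the first sum in fact works uniformly for all $\gamma\in\R$, not just $|\gamma|\ge 1$, which is slightly stronger than what the lemma asserts. The cost is a couple of extra $\log$ factors from the two dyadic variables, which is harmless under $\lll$. Your separate treatment of the second sum via the primitive-vector factorization $(k_1,k_2)=v(p,q)$, $(l_1,l_2)=w(p,q)$ is also a clean variant of the paper's $k_2\mapsto -k_2$, $\gamma=0$ specialization. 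Minor points to tidy up in a full write-up: state explicitly that when $\gcd(k_1,l_1)\nmid m_0$ the inner count is zero (you are only using an upper bound so this is fine), and note that dropping the conditions $k_2,l_2\neq 0$ when counting only enlarges the sum.
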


\begin{proof}
We obtain the second sum by substituting $k_2\mapsto -k_2$ and $\gamma=0$ in the first sum. Thus it suffices to bound the first sum in the cases $\abs{\gamma}\geq 1$ and $\gamma=0$. We will treat both cases simultaneously. We have
\begin{gather}
	\sum_{\substack{ k,l\in \somevectors \\k_1,k_2,l_1,l_2\neq 0\\ \abs{k_1l_2+k_2l_1+2\gamma k_1 l_1}< 1/2}} \dfrac1{\|k\|^2\|l\|^2} \ll \\
	\sum_{\substack{ k,l\in \somevectors \\ k_1,k_2,l_1,l_2\neq 0\\ \abs{k_1l_2+k_2l_1+2\gamma k_1 l_1}< 1/2}} \dfrac1{(\|(k_1,k_2)\|+\abs{k_3})^2 (\|(l_1,l_2)\|+\abs{l_3})^2} \ll \\
	\sum_{\substack{ 1\leq |k_1|,\abs{k_2},|l_1|,\abs{l_2}\leq \BOUND\\ \abs{k_1l_2+k_2l_1+2\gamma k_1 l_1}< 1/2}} \int_0^{\BOUND}\int_0^{\BOUND} \dfrac{\dd{k_3}\dd{l_3} }{(\|(k_1,k_2)\|+\abs{k_3})^2 (\|(l_1,l_2)\|+\abs{l_3})^2} \ll \\
	\sum_{\substack{ 1\leq |k_1|,\abs{k_2},|l_1|,\abs{l_2}\leq \BOUND\\ \abs{k_1l_2+k_2l_1+2\gamma k_1 l_1}< 1/2}} \dfrac{1}{\|(k_1,k_2)\| \|(l_1,l_2)\|} \leq
	\sum_{\substack{1\leq \abs a, \abs b, \abs x,\abs y\leq \BOUND \\ bx-ay=[2\gamma ab]}} \dfrac1{ \norm{(a,b)} \norm{(x,y)} }\leq \\
	\sum_{r=1}^{\BOUND }\sum_{\substack{ 1\leq\abs a,\abs b\leq \BOUND \\ \gcd(a,b)=1 }}\sum_{\substack{1\leq \abs x,\abs y\leq \BOUND \\ bx-ay=[2\gamma r^2ab]/r}} \dfrac1{ r\norm{(a,b)} \norm{(x,y)} },
\end{gather}
where we have used the notation $[x]$ for the integer nearest to $x\in\R$, where we round away from zero if there is an ambiguity.

\begin{figure}[h]
	\label{helpfulpicture}
	\caption{In the proof of Lemma \ref{sum1and2}, each integer point $(x,y)$ on the line $bx-ay=c$ is mapped to the closest integer point $(x',y')$ on the line $L$ with shorter or equal length.}
	\begin{center}
		\fbox{\includegraphics{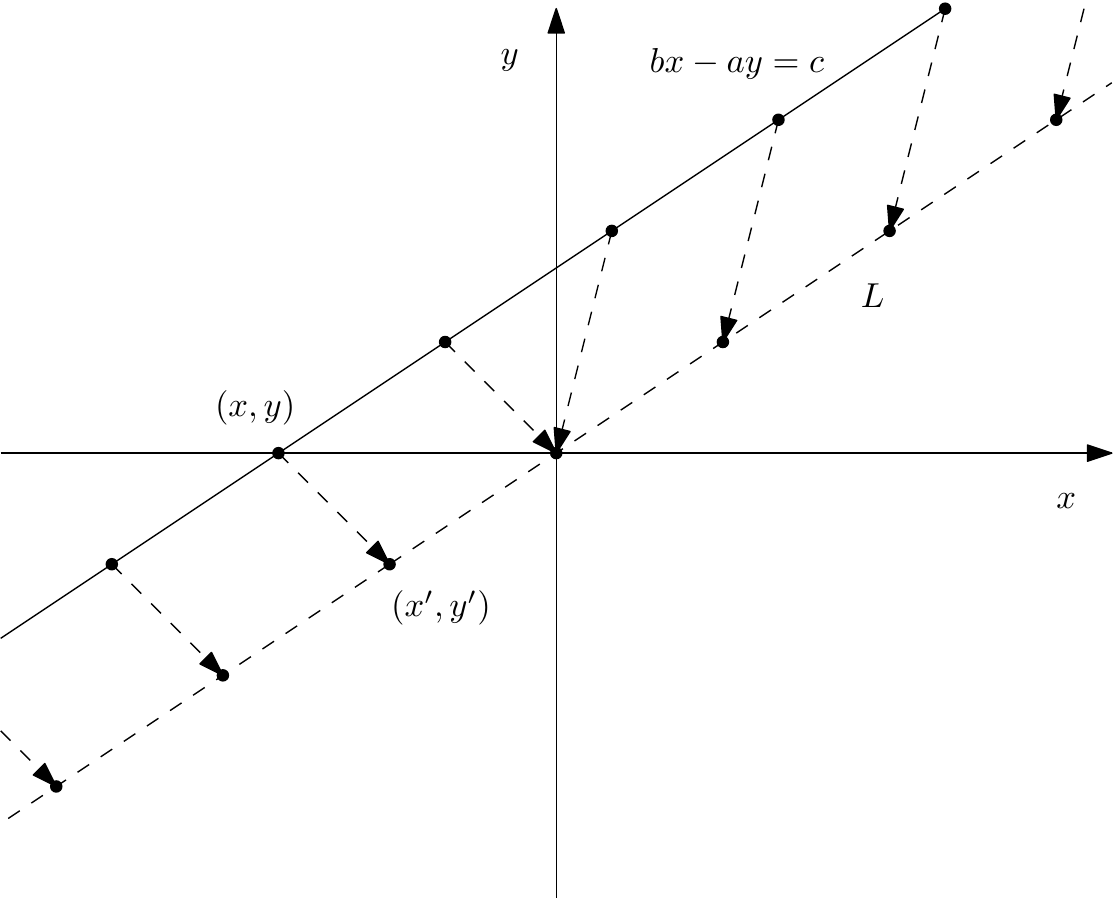}}
	\end{center}
\end{figure}

Consider the innermost sum, in which $a,b,r$ are fixed, and let $c\defeq [2\gamma r^2ab]/r$.
Now, since $\gcd(a,b)=1$, the equation $bx-ay=c$ has the set of solutions $(x,y)=(x_0,y_0)+m(a,b), m\in\Z$, granted there exists some solution $(x_0,y_0)\in\Z^2$. For each solution $(x,y)$ we will define $(x',y')$ to be the integer vector on the line $L$ spanned by $(a,b)$ which is closest to $(x,y)$ among all vectors $(x',y')$ with $\norm{(x',y')}\leq\norm{(x,y)}$; if there is an ambiguity, choose the shorter vector $(x',y')$. See Figure \ref{helpfulpicture}. We see that the set of solutions $(x,y)\in\Z^2$ maps to the set of vectors $(x',y')=m(a,b), m\in\Z$, with at most two vectors $(x,y)$ mapping to any given $(x',y')$. Now we will bound $1/\|(x,y)\|$ by $1/\|(x',y')\|=1/(m\|(a,b)\|)$ if $m\neq 0$, and otherwise we will use the bound $1/\|(x,y)\|\leq 1/D$, where $D$ is the distance between the line $bx-ay=c$ and the origin in $\R^2$. Note that the case $m=0$ cannot occur if $\gamma=0$ since we are summing over nonzero vectors only; but if $\abs \gamma\geq 1$, we get $D=\abs c/\|(a,b)\|\geq \abs{2rab}/\|(a,b)\|$. We also have $\abs m\leq\sqrt 2 \cdot \BOUND$. Thus the last sum above can be bounded by
\begin{gather}
	\sum_{r=1}^{\BOUND }\sum_{\substack{ 1\leq\abs a,\abs b\leq \BOUND \\ \gcd(a,b)=1 }} \left( 2\dfrac1{ r\norm{(a,b)}}\dfrac{\|(a,b)\|}{2rab} + 4\sum_{m=1}^{\sqrt 2 \BOUND} \dfrac1{ r\norm{(a,b)} \norm{m(a,b)} } \right) \ll \\
	\sum_{r=1}^{\BOUND }\sum_{a=1}^{\BOUND}\sum_{b=1}^{\BOUND}\dfrac1{r^2}\dfrac1a\dfrac1b+
	\sum_{r=1}^{\BOUND }\sum_{\substack{ 1\leq\abs a,\abs b\leq \sqrt 2 \BOUND }} \sum_{m=1}^{\sqrt 2 \BOUND} \dfrac 1r\dfrac 1m \dfrac1{ \norm{(a,b)}^2 } \lll 1,
\end{gather}
where all the individual sums in the last expression have at worst logarithmic behavior in $\BOUND$, so we are done.
\end{proof}

It remains to deal with the third part of \eqref{thirdsum}, and for this we will need to use the integral bound from Lemma \ref{hessianlemma}. First let us prove Lemma \ref{hessianlemma}.

\begin{proof}[Proof of Lemma \ref{hessianlemma}]
	We will prove the bound for the inner integral with respect to $a_1$ and $a_2$. Then the result follows by the compactness of the integration domain. Recalling \eqref{fi}, the integral we need to bound is
	\begin{gather}
		\int_{\R^2} \exp\left(2\pi i t \left ( \pm\sqrt{a_1\moo k_1^2+a_2\moo k_2^2+a_3\moo k_3^2}
		                         \pm\sqrt{a_1\moo l_1^2+a_2\moo l_2^2+a_3\moo l_3^2}  \right)\right)\times \\
		     \times \left(\dfrac{\norm k}{\sqrt{a_1\moo k_1^2+a_2\moo k_2^2+a_3\moo k_3^2}}\right)^2
		     \left(\dfrac{\norm l}{\sqrt{a_1\moo l_1^2+a_2\moo l_2^2+a_3\moo l_3^2}}\right)^2 \psi_1(a_1)\psi_2(a_2) \dd{a_1}\dd{a_2}.
	\end{gather}
	We perform a variable substitution from $(a_1,a_2)$ to $(x,y)$ where $x\defeq a_1\moo k_1^2+a_2\moo k_2^2+a_3\moo k_3^2, y\defeq a_1\moo l_1^2+a_2\moo l_2^2+a_3\moo l_3^2$, which yields the Jacobian $1/|\moo k_1^2\moo l_2^2-\moo k_2^2\moo l_1^2|$. The integral above becomes
	\begin{gather}
		\dfrac{1}{|\moo k_1^2\moo l_2^2-\moo k_2^2\moo l_1^2|}
		\int_{\R^2} e^{2\pi i t( \pm\sqrt{x}\pm\sqrt{y}  )} \frac{\norm k^2}{x} \frac{\norm l^2}{y}  \Psi_{k,l,N}(x,y) \dd x\dd y. \label{ulf}
	\end{gather}
	where we define $\Psi_{k,l,N}(x,y)\defeq \psi_1(a_1)\psi_2(a_2)$ (noting that $a_1,a_2$ may be expressed in terms of $x,y$ when $a_3,\eta,k,l$ are held fixed). 
	Since $a_1,a_2,a_3$ are bounded above and below throughout the support of $\psi_1\psi_2\psi_3$, it follows that $\abs x\ll \|\moo k\|^2\ll \norm{k}^2$, and similarly $\abs x \gg \|\moo k\|^2\gg \norm k^2$, throughout the support of $\Psi_{k,l,N}$. Likewise $\abs y\ll\|l\|^2$ and $\abs y\gg \|l\|^2$ throughout the support of $\Psi_{k,l,N}$.

	We will assume without loss of generality that $\norm k\geq \norm l$, and use integration by parts on the inner integral of \eqref{ulf} with respect to $x$; if instead $\norm k\leq\norm l$ were the case, we repeat the following argument but integrate by parts instead with respect to $y$.
	An antiderivative of $e^{2\pi i t\sqrt x}$ with respect to $x$ is $\dfrac{e^{2\pi it\sqrt x}}{\pi it}\left(\sqrt x-\dfrac1{2\pi it}\right)$. Since $\psi_1\psi_2$ is the characteristic function of a rectangle, it follows that $x\mapsto \Psi_{k,l,N}(x,y)$ is the characteristic function of some interval $[b_1(y),b_2(y)]$, where the length of the interval is $\ll \|k\|^2$. Thus
	\begin{gather}
		\int_{\R} e^{2\pi i t( \pm\sqrt{x}\pm\sqrt{y}  )} \frac{\norm k^2}{x} \frac{\norm l^2}{y}  \Psi_{k,l,N}(x,y) \dd x = \\
			\left[
				\dfrac{e^{2\pi i t( \pm\sqrt{x}\pm\sqrt{y}  )}}{\pm \pi it}\left (\sqrt x-\dfrac1{2\pi i t}\right) \frac{\norm k^2}{x} \frac{\norm l^2}{y} 
			\right]_{x=b_1(y)}^{b_2(y)} - \\
				\int_{b_1(y)}^{b_2(y)}
				\dfrac{e^{2\pi i t( \pm\sqrt{x}\pm\sqrt{y}  )}}{\pm \pi it}\left (\sqrt x-\dfrac1{2\pi i t}\right) \left(-\frac{\norm k^2}{x^2}\right) \frac{\norm l^2}{y}  \dd x.
	\end{gather}
	Using the bounds $\norm k^2\ll \abs x\ll\norm k^2$, we can bound the above expression by
	\begin{gather}
		2\sup_{x\in[b_1(y),b_2(y)]}
			\left(
				\dfrac{e^{2\pi i t( \pm\sqrt{x}\pm\sqrt{y}  )}}{\pm \pi it}\left (\sqrt x-\dfrac1{2\pi i t}\right) \frac{\norm k^2}{x} \frac{\norm l^2}{y} 
			\right) + \\
			\abs{b_2(y)-b_1(y)}\times\sup_{x\in[b_1(y),b_2(y)]}\left(
				\dfrac{e^{2\pi i t( \pm\sqrt{x}\pm\sqrt{y}  )}}{\pm \pi it}\left (\sqrt x-\dfrac1{2\pi i t}\right) \left(-\frac{\norm k^2}{x^2}\right) \frac{\norm l^2}{y}
			\right) \ll \\
		\sup_{x\in[b_1(y),b_2(y)]}
			\left(
				\dfrac{1}{t} \sqrt x \frac{\norm k^2}{x} \frac{\norm l^2}{y} 
			\right) + 
			\abs{b_2(y)-b_1(y)}\times\sup_{x\in[b_1(y),b_2(y)]}\left(
				\dfrac{1}{t}\dfrac{\sqrt x}x \frac{\norm k^2}{x} \frac{\norm l^2}{y}
			\right) \ll \\
		\dfrac1t\sqrt{\norm k^2} \frac{\norm k^2}{\norm k^2} \frac{\norm l^2}{y} +
		\norm k^2 \dfrac1t\dfrac1{\sqrt{\norm k^2}} \frac{\norm k^2}{\norm k^2} \frac{\norm l^2}{y} = 
		2\dfrac1t\norm{k} \frac{\norm l^2}{y}.
	\end{gather}
	We finally integrate with respect to $y$, and use the bounds $\norm l^2\ll\abs y\ll \norm l^2$.
	Write $D\defeq \{y\in\R: \Psi_{k,l,N}(x,y)=1\text{ for some $x\in\R$}\}$ for the domain of integration.
	Thus \eqref{ulf} is bounded by
	\begin{gather}
		\dfrac{1}{|\moo k_1^2\moo l_2^2-\moo k_2^2\moo l_1^2|}\norm l^2\sup_{y\in D}\left(\dfrac1t\norm{k} \frac{\norm l^2}{y}\right ) \ll 
		\dfrac{1}{|\moo k_1^2\moo l_2^2-\moo k_2^2\moo l_1^2|} \dfrac{\norm k\norm l^2}{t} = \\
		\dfrac{1}{|\moo k_1^2\moo l_2^2-\moo k_2^2\moo l_1^2|} \dfrac{\norm k^{3/2}\norm l^{3/2}}{t} \dfrac{\norm l^{1/2}}{\norm k^{1/2}} \leq 
		\dfrac{1}{|\moo k_1^2\moo l_2^2-\moo k_2^2\moo l_1^2|} \dfrac{\norm k^{3/2}\norm l^{3/2}}{t},
	\end{gather}
	where the last inequality follows from our assumption $\norm k\geq\norm l$.
\end{proof}

Applying Lemma \ref{hessianlemma}, and recalling \eqref{klkl}, it now only remains to bound
\begin{gather}
	\int_{[1,2)^3} \sum\nolimits_3 \dfrac{1}{\|k\|^{2}\|l\|^{2}} \abs{\int_{\R^3}e^{2\pi i t\Phi_{k,l}(AN)}\psi_{k,l}(AN)\dd a}\dd \eta \ll \\
	\int_{[1,2)^3} \sum\nolimits_3 \dfrac{1}{\|k\|^{2}\|l\|^{2}} \dfrac1{t}  \dfrac{\|k\|^{3/2}\|l\|^{3/2}}{\abs{\moo k_1^2\moo l_2^2-\moo k_2^2 \moo l_1^2} } \dd \eta = \\
	\int_{[1,2)^3} \sum\nolimits_3 \dfrac{1}{\|k\|^{2}\|l\|^{2}} \dfrac1{t}  \dfrac{\|k\|^{3/2}\|l\|^{3/2}}{\abs{k_1 l_2-k_2 l_1} \abs{k_1l_2+k_2l_1+2\gamma k_1l_1}} \dd \eta.
\end{gather}
The integrand only depends on $\eta_1=-\gamma$. Integrating with respect to $\eta_2$ and $\eta_3$, the expression above becomes
\begin{gather}
	\int_{(-2,-1]}
	\sum_{\substack{ k,l\in \somevectors \\k_1,k_2,l_1,l_2\neq 0 \\ \abs{k_1l_2+k_2l_1+2\gamma k_1 l_1}\geq 1/2 \\ k_1l_2-k_2l_1\neq 0 }}
	\dfrac{1}{\|k\|^{2}\|l\|^{2}} \dfrac1{t}  \dfrac{\|k\|^{3/2}\|l\|^{3/2}}{\abs{k_1 l_2-k_2 l_1} \abs{k_1l_2+k_2l_1+2\gamma k_1l_1}} \dd \gamma.
\end{gather}

We split the sum into one over $k_3,l_3$ and one over the other coordinates. We use the fact that $\|k\|\geq \abs{k_3}$ if $k_3\neq 0$, and otherwise $\|k\|\geq 1$, and likewise for $l$. Thus the above expression is bounded by
	\begin{gather}
		\dfrac1{t}
		\int_{-2}^{-1}
		\sum_{\substack{ 1\leq\abs {k_1},\abs{k_2},\abs{l_1},\abs{l_2}\leq \BOUND \\ \abs{k_1l_2+k_2l_1+2\gamma k_1 l_1}\geq 1/2 \\ k_1l_2-k_2l_1\neq 0 }}
		 \dfrac1{\abs{k_1 l_2-k_2 l_1} \abs{k_1l_2+k_2l_1+2\gamma k_1l_1}} \dd\gamma \times \\
		\times \left(1+\sum_{1\leq\abs{k_3}\leq \BOUND}\dfrac{1}{\abs{k_3}^{1/2}}\right)
		\left(1+\sum_{1\leq\abs{l_3}\leq \BOUND} \dfrac{1}{\abs{l_3}^{1/2}} \right)
		 \ll \\
		\dfrac{\left((\BOUND)^{1/2}\right)^2 }{t}
		\int_{-2}^{-1}
		\sum_{\substack{ 1\leq\abs {k_1},\abs{k_2},\abs{l_1},\abs{l_2}\leq \BOUND \\ \abs{k_1l_2+k_2l_1+2\gamma k_1 l_1}\geq 1/2 \\ k_1l_2-k_2l_1\neq 0 }}
		 \dfrac1{\abs{k_1 l_2-k_2 l_1} \abs{k_1l_2+k_2l_1+2\gamma k_1l_1}} \dd\gamma \lll \\
		\int_{-2}^{-1}
		\sum_{\substack{ 1\leq\abs {k_1},\abs{k_2},\abs{l_1},\abs{l_2}\leq \BOUND \\ \abs{k_1l_2+k_2l_1+2\gamma k_1 l_1}\geq 1/2 \\ k_1l_2-k_2l_1\neq 0 }}
		 \dfrac1{\abs{k_1 l_2-k_2 l_1} \abs{k_1l_2+k_2l_1+2\gamma k_1l_1}} \dd\gamma \leq    \label{vom} \\
		 		\int_{-2}^{-1}
		 \sum_{r=1}^{\BOUND}
		\sum_{\substack{ 1\leq\abs {k_1},\abs{k_2},\abs{l_1},\abs{l_2}\leq \BOUND \\ \abs{k_1l_2+k_2l_1+2\gamma k_1 l_1r}\geq 1/(2r) \\ k_1l_2-k_2l_1\neq 0 \\ \gcd(k_1,l_1)=1 }}
		 \dfrac1{r^2 \abs{k_1 l_2-k_2 l_1} \abs{k_1l_2+k_2l_1+2\gamma k_1l_1r}} \dd\gamma \leq \\
		 		\int_{-2}^{-1}
		 \sum_{r=1}^{\BOUND}
		 \sum_{1\leq \abs w\leq 2\BOUND^2 }
		\sum_{\substack{ 1\leq\abs {k_1},\abs{l_1}\leq \BOUND \\ \gcd(k_1,l_1)=1 }}
		\sum_{\substack{ 1\leq\abs{k_2},\abs{l_2}\leq \BOUND \\ \abs{k_1l_2+k_2l_1+2\gamma k_1 l_1r}\geq 1/(2r) \\ k_1l_2-k_2l_1=w }}
		 \dfrac1{r^2 \abs{w} \abs{k_1l_2+k_2l_1+2\gamma k_1l_1r}} \dd\gamma. \label{freml}
	\end{gather}
Consider the innermost sum, where $k_1,l_1,\gamma,w,r$ are fixed. Since $\gcd(k_1,l_1)=1$ inside the sum, it follows that the equation $k_1 l_2-k_2 l_1=w$ has the set of solutions $(k_2,l_2)=(x_0,y_0)+m(k_1,l_1), m\in\Z$, granted there exists some solution $(x_0,y_0)\in\Z^2$. Therefore $k_1l_2+k_2l_1+2\gamma k_1l_1r$ assumes the values $c_0+2k_1 l_1 m$ for $m\in\Z$  as $(k_2,l_2)$ varies, where $c_0\defeq k_1y_0+l_1x_0+2\gamma k_1l_1r$ is constant. In particular, $k_1l_2+k_2l_1+2\gamma k_1l_1r$ assumes consecutive values spaced a distance $2\abs{k_1 l_1}$ apart, with at most two values smaller than $2\abs{k_1 l_1}$ in absolute value, and the number of values it assumes is $\leq 2\BOUND$.
It follows that the expression \eqref{freml} above is
	\begin{gather}
		\ll
		\int_{-2}^{-1}
		 \sum_{r=1}^{\BOUND}
		 \sum_{1\leq \abs w\leq 2\BOUND^2}
		\sum_{\substack{ 1\leq\abs {k_1},\abs{l_1}\leq \BOUND \\ \gcd(k_1,l_1)=1 }}
			\dfrac1{r^2\abs w} \times  \\
			\times \left(\sum_{1\leq \abs m\leq \BOUND}\dfrac{1}{2\abs{mk_1l_1}}+
				\!\!\!\!\!
				\sum_{\substack{ 1\leq\abs{k_2},\abs{l_2}\leq \BOUND \\ \frac1{2r} \leq \abs{k_1l_2+k_2l_1+2\gamma k_1 l_1r} < 2\abs{k_1 l_1} \\ k_1l_2-k_2l_1=w }}
					\!\!\!\!\!\!\!\!\!\!
					\dfrac1{\abs{k_1l_2+k_2l_1+2\gamma k_1l_1r}}
			\right)
			\dd\gamma.
	\end{gather}
We expand this into a sum of two terms. We have
	\begin{gather}
		\int_{-2}^{-1}
		 \sum_{r=1}^{\BOUND}
		 \sum_{1\leq \abs w\leq 2\BOUND^2}
		\sum_{\substack{ 1\leq\abs {k_1},\abs{l_1}\leq \BOUND \\ \gcd(k_1,l_1)=1 }}
			\dfrac1{r^2\abs w} \sum_{1\leq \abs m\leq \BOUND}\dfrac{1}{2\abs{mk_1l_1}}\dd\gamma \lll 1,
	\end{gather}
which takes care of the first term. It remains to bound
	\begin{gather}
		\int_{-2}^{-1}
		\sum_{\substack{ 1\leq r,\abs{k_1},\abs{l_1}\leq \BOUND \\ 1\leq \abs w\leq 2\BOUND^2 \\ \gcd(k_1,l_1)=1 }}
			\dfrac1{r^2\abs w} 
				\!\!\!\!\!
				\sum_{\substack{ 1\leq\abs{k_2},\abs{l_2}\leq \BOUND \\ \frac1{2r} \leq \abs{k_1l_2+k_2l_1+2\gamma k_1 l_1r} < 2\abs{k_1 l_1} \\ k_1l_2-k_2l_1=w }}
					\!\!\!\!\!\!\!\!\!\!
					\dfrac1{\abs{k_1l_2+k_2l_1+2\gamma k_1l_1r}}
			\dd\gamma.
	\end{gather}
We may without loss of generality assume that $k_1l_2+k_2l_1+2\gamma k_1 l_1 r$ is positive in the innermost sum, since we obtain the opposite case by switching the signs of $k_1,k_2,w$. Moreover, we may extend the sum to range over all $(k_2,l_2)\in\Z^2$.
It thus suffices to bound
\begin{gather}
		\sum_{\substack{ 1\leq r,\abs{k_1},\abs{l_1}\leq \BOUND \\ 1\leq \abs w\leq 2\BOUND^2 \\ \gcd(k_1,l_1)=1 }}
			\dfrac1{r^2\abs w} 
				\int_{-2}^{-1}
				\sum_{\substack{ (k_2,l_2)\in\Z^2 \\ \frac1{2r} \leq ({k_1l_2+k_2l_1+2\gamma k_1 l_1r}) < 2\abs{k_1 l_1} \\ k_1l_2-k_2l_1=w }}
					\!\!\!\!\!\!\!\!\!\!
					\dfrac1{({k_1l_2+k_2l_1+2\gamma k_1l_1r})}
			\dd\gamma.
\end{gather}
In the innermost sum, which is a sum over precisely one pair $(k_2,l_2)$, and where $k_1,l_1,\gamma,w,r$ are fixed,  denote by $f(\gamma)$ the unique positive value in $[1/(2r),\abs{2 k_1 l_1})$ which $k_1l_2+k_2l_1+2\gamma k_1l_1r$ assumes as $(k_2,l_2)$ varies, if it exists, or let $f(\gamma)$ be undefined otherwise. Then $f(\gamma)=c+2\gamma k_1l_1r \pmod{2\abs{k_1l_1}}$ on its domain of definition, where $c=k_1y_0+l_1x_0$ is a constant, so $f(\gamma)$ coincides with a sawtooth wave with slope $2k_1l_1r$ and period $1/r$, except that it is undefined where the sawtooth wave has a value in $[0,1/(2r))$. Now we can partition $(-2,1]\cap\op{dom}(f)$ into at most $r+1$ subintervals $I_m$ such that $f$ is linear on each. The integral of $1/f(\gamma)$ with respect to $\gamma$ on any such subinterval $I_m$ is
\begin{gather}
	\int_{I_m} \dfrac{\dd\gamma}{f(\gamma)}
	= \left[ \dfrac{\log\abs{k_1l_2+k_2l_1+2\gamma k_1l_1r} }{2 k_1l_1r} \right]_{\gamma=\op{inf}I_m }^{ \op{sup}I_m } \ll \\
	\dfrac{ \log\abs{(2+4r)\BOUND^2}+\abs{\log\frac1{2r} } }{ \abs{2 k_1l_1r} } \lll \dfrac{ \log r }{ \abs{k_1l_1r} },
\end{gather}
where the asymptotic constants are independent of $m$.
We now get
	\begin{gather}
		\sum_{\substack{ 1\leq r,\abs{k_1},\abs{l_1}\leq \BOUND \\ 1\leq \abs w\leq 2\BOUND^2 \\ \gcd(k_1,l_1)=1 }}
			\dfrac1{r^2\abs w} 
				\sum_{m=1}^{r+1}
				\int_{I_m} \dfrac{\dd\gamma}{f(\gamma)} \lll \\
		\sum_{\substack{ 1\leq r,\abs{k_1},\abs{l_1}\leq \BOUND \\ 1\leq \abs w\leq 2\BOUND^2 \\ \gcd(k_1,l_1)=1 }}
			\dfrac1{r^2\abs w} 
				\dfrac{(r+1)\log r}{\abs{k_1l_1r}} \lll 1,
	\end{gather}
and this completes the proof of Theorem \ref{bam} for $n=3$.\qed

\p {Proof of Theorem \ref{bam} for $n=2$} \label{sectionmain2}
We will briefly sketch how the proof of Theorem \ref{bam} for the case $n=3$ may be modified for the case $n=2$.

By a decomposition of the measure on $\GL_2(\R)/\GL_2(\Z)$ analogous to equation \eqref{decomposition}, it suffices to prove that
\begin{gather}
	\sqrt{\int_{[1,2)^2}\int_{\R^2} \abs{E_{AN}(t)}^2 \psi(a)\dd a\dd \eta} \lll t^{1/2},
\end{gather}
where $\psi(a)\defeq 4\pi\abs{\det A}^2\psi_1(a_1)\psi_2(a_2)$ for the characteristic functions $\psi_1,\psi_2$ of two closed intervals contained in $(0,\infty)$, and where we use the parametrization $N=\mat{1&\eta_1\\0&1}, \eta_1\in[1,2)$, $A=\mat{1/\sqrt{a_1}&0\\0&1/\sqrt{a_2}}, a_i\in (0,\infty)$.

The analog of Claim \ref{vroom} in two dimensions is that it suffices to prove
\begin{gather}
	\int_{[1,2)^2}\int_{\R^2} \abs{E^{\e}_{AN}(t)}^2 \psi(a)\dd a\dd \eta \lll t,
\end{gather}
for $\e\geq 1/t^{1/2}$.

Next, to estimate the behavior of $E_X^\e$, we begin by considering the Fourier transform of the characteristic function $\chi_\Omega$ of the standard unit ball in $\R^2$. It equals (see equation 11 in chapter 6.4 of \cite{SteinShakarchi})
\begin{gather}
	\hat{\chi_{\Omega}}(k)=2\pi\int_0^1 J_0(2\pi\norm k r)r\dd r,
\end{gather}
where we have written $J_\alpha$ for the Bessel function of the first kind of order $\alpha$. Integrating the Taylor series of $J_0$ (see equation 9.1.10 of \cite{handbook}) term by term, we obtain
\begin{gather}
	\hat{\chi_{\Omega}}(k)=\dfrac{J_1(2\pi\norm k)}{\norm k}.
\end{gather}
Using the asymptotics $J_1(x)=\sqrt{\frac2{\pi x}}\cos(x-3\pi/4)+\BigO(x^{-3/2})$ for large $x$ (see equation 9.2.1 of \cite{handbook}), we obtain
\begin{gather}
	\hat{\chi_{\Omega}}(k)=\dfrac{\cos(2\pi\norm k-\frac{3\pi}4)}{\pi\norm k^{3/2}}+\BigO(\norm k^{-5/2}),
\end{gather}
so it follows, as before, that
\begin{gather}
	\hat{\chi_{\Omega_X}}(k)=\abs{\det X}\inv \dfrac{\cos(2\pi\norm k_X-\frac{3\pi}4)}{\pi\norm k_X^{3/2}}+\BigO(\norm k^{-5/2})
\end{gather}
where we have defined $\norm k_X\defeq \|(X\inv)\T k\|$.

Since $E_X^\e(t)=\sum_{k\neq(0,0)}\hat{\chi_{t\Omega_X}}(k)\hat{\rho_\e}(k)=\sum_{k\neq(0,0)}t^2\hat{\chi_{\Omega_X}}(tk)\hat{\rho}(\e k)$, we obtain, as before,
\begin{gather}
	E_X^\e(t)=\abs{\det X}\inv \sum_{k\neq(0,0)}\left(\dfrac{t^2}{t^{3/2}}\dfrac{\cos(2\pi\norm {tk}_X-\frac{3\pi}4)}{\pi\norm k_X^{3/2}}+\dfrac{t^2}{t^{5/2}}\BigO(\norm k^{-5/2})\right)\hat\rho(\e k)\\
	= \abs{\det X}\inv t^{1/2} \left(\sum_{k\neq(0,0)}\dfrac{\cos(2\pi t\norm {k}_X-\frac{3\pi}4)}{\pi\norm k_X^{3/2}}\hat\rho(\e k)\right)+\BigO(1).
\end{gather}
Writing $\cos(x)=(e^{ix}+e^{-ix})/2$ and squaring $E_X^\e$, it follows, analogous to Claim \ref{afterstuff}, since $\hat\rho$ is real-valued, that it suffices to show that
\begin{gather}
	\sum_{k,l\neq(0,0)} \dfrac{\abs{\hat\rho(\e k)\hat\rho(\e l)} }{\|k\|^{3/2}\|l\|^{3/2}} \abs{I_{k,l}(t)} \lll 1, \label{fot}
\end{gather}
	for $\e\geq 1/t^{1/2}$, where
	\begin{gather}
		I_{k,l}(t) \defeq \int_{[1,2)^2}\int_{\R^2} e^{2\pi i t\Phi_{k,l}(AN)}\psi_{k,l}(AN) \dd a\dd \eta, \\
		%I_{k,l}(t) \defeq \int e^{2\pi i t\Phi_{k,l}(A)} \psi_{k,l}(A)\dmu\mu A,\\
		\Phi_{k,l}(AN)\defeq \pm \|k\|_{AN}\pm \|l\|_{AN}, \\
		\psi_{k,l}(AN)\defeq \left(\dfrac{\|k\|}{\|k\|_{AN}}\right)^{3/2} \left(\dfrac{\|l\|}{\|l\|_{AN}}\right)^{3/2}\psi_1(a_1)\psi_2(a_2),
	\end{gather}
	for all four choices of signs in the definition of $\Phi_{k,l}$.

The rest of the proof consists of bounding different parts of the sum \eqref{fot}. Doing this for $n=2$ amounts to repeating the arguments for $n=3$ with the difference that now $k,l$ are instead in $\Z^2$ and that the exponents of $\norm k$ and $\norm l$ in \eqref{fot} are $3/2$ instead of $2$. Many of the bounds are improved in the case $n=2$; in contrast, most of these fail for $n\geq 4$ if we repeat our method without modification; the technical reason being that the exponents of $\norm k,\norm l$ for $k,l\in \Z^n$ in the analog of \eqref{fot} are $(n+1)/2$, whereas we would need the exponents to be roughly of the order $n$ to get our desired bounds.

\begin{itemize}
	\item We can neglect coordinates larger than $t^{1/2+\delta}$ in magnitude by using the rapid decay of $\hat\rho$, in the same way did it for $n=3$.
	\item We can neglect integer vectors $k,l$ with at least one zero in each vector in the same way we did for $n=3$, since we need only $\int_1^{t^{1/2+\delta}}\dfrac1{r^{3/2}}\dd r\ll 1$.
	\item Assume that $k_1=0, k_2,l_1,l_2\neq 0$. Then, as in the proof of Lemma \ref{k1lemma}, the van der Corput Lemma implies that $\abs{I_{k,l}(t)}\ll \frac1t\frac{\norm l}{\abs{l_1}}.$ Now
		\begin{gather}
			\sum_{\substack{1\leq \abs{k_2},\abs{l_1},\abs{l_2}\leq t^{1/2+\delta} \\ k_1=0 }} \dfrac{1}{\|k\|^{3/2}\|l\|^{3/2}} \frac1t\frac{\norm l}{\abs{l_1}} \leq
			\sum_{1\leq \abs{k_2},\abs{l_1},\abs{l_2}\leq t^{1/2+\delta} } \dfrac1t \dfrac{1}{\abs{k_2}^{3/2}\abs{l_1}\abs{l_2}^{1/2}} \ll \\
			\dfrac1t\cdot  1\cdot\log(t^{1/2+\delta})\cdot (t^{1/2+\delta})^{1/2} \ll 1. \label{blue}
		\end{gather}
	\item Assume that $k_2=0, k_1,l_1,l_2\neq 0$. We follow the proof of Lemma \ref{k2lemma}. The bound \eqref{lasse} still holds for $n=2$ (where we change the integration domain to $\R^2$ instead), so we are left with bounding two sums, one ranging over the condition $\abs{l_2-2\eta_1l_1}\geq 1$, and the other ranging over the condition $\abs{l_2-2\eta_1l_1}< 1$.
	The first sum we treat as follows:
		\begin{gather}
			\int_{[1,2)}\sum_{\substack{1\leq \abs{k_1},\abs{l_1},\abs{l_2}\leq t^{1/2+\delta} \\ k_2 = 0 \\ \abs{l_2-2\eta_1l_1}\geq 1 }} \dfrac1{\norm k^{3/2}\norm l^{3/2}} \abs{ \int_{\R^2} e^{2\pi i \Phi_{k,l}(AN)} \psi_{k,l}(AN)\dd a}\dd{\eta_1} \ll \\
			\sum_{\substack{1\leq \abs{k_1},\abs{l_1},\abs{l_2}\leq t^{1/2+\delta} \\ \abs{l_2-2\eta_1l_1}\geq 1 }} \dfrac1{\abs{k_1}^{3/2}\norm l^{3/2}} \dfrac1t \dfrac{\norm l}{\abs{l_2}} \ll
			\sum_{\substack{1\leq \abs{k_1},\abs{l_1},\abs{l_2}\leq t^{1/2+\delta}  }} \dfrac1t \dfrac1{\abs{k_1}^{3/2}\abs{l_1}^{3/2} \abs{l_2}} \ll 1,
		\end{gather}
		where the last bound is completely analogous to \eqref{blue}. The second sum we treat as follows:
		\begin{gather}
			\int_{[1,2)}\sum_{\substack{1\leq \abs{k_1},\abs{l_1},\abs{l_2}\leq t^{1/2+\delta} \\ k_2 = 0 \\ \abs{l_2-2\eta_1l_1}< 1 }} \dfrac1{\norm k^{3/2}\norm l^{3/2}} \abs{ \int_{\R^2} e^{2\pi i \Phi_{k,l}(AN)} \psi_{k,l}(AN)\dd a}\dd{\eta_1} \ll \\
			\int_{[1,2)}
				\sum_{\substack{1\leq \abs{k_1},\abs{l_1}\leq t^{1/2+\delta}}}
				\sum_{\substack{1\leq \abs{l_2}\leq t^{1/2+\delta}\\ \abs{l_2-2\eta_1l_1}< 1 }}
				\dfrac1{\abs{k_1}^{3/2}\norm l^{3/2}} \dd{\eta_1}.
		\end{gather}
		The condition $\abs{l_2-2\eta_1l_1}< 1$ implies there is at most one value that $l_2$ may assume in the innermost sum where $l_1,\eta_1$ are held fixed, so we may remove the summation over $l_2$, and use the bound $\norm l\geq \abs{l_1}$ for the summand. The sum above is thus bounded by
		\begin{gather}
			\ll  \sum_{\substack{1\leq \abs{k_1},\abs{l_1}\leq t^{1/2+\delta}  }} \dfrac1{\abs{k_1}^{3/2}\abs{l_1}^{3/2}} \ll 1.
		\end{gather}
	\item We need to prove the analog of Lemma \ref{sum1and2}, that is, we need to prove that
		\begin{gather}
			\sum_{\substack{1\leq \abs{k_1},\abs{k_2},\abs{l_1},\abs{l_2}\leq t^{1/2+\delta} \\ \abs{k_1l_2+k_2l_1+2\gamma k_1l_1}<1 }} \dfrac1{\norm k^{3/2}\norm l^{3/2}} \lll 1,
		\end{gather}
	such that the asymptotic constant is independent of $\gamma$, where $\gamma$ is either $-\eta_1\geq 1$ or $0$. But in the proof of Lemma \ref{sum1and2}, we actually prove
		\begin{gather}
			\sum_{\substack{1\leq \abs{k_1},\abs{k_2},\abs{l_1},\abs{l_2}\leq \BOUND \\ \abs{k_1l_2+k_2l_1+2\gamma k_1l_1}<1 }} \dfrac1{\norm k\norm l} \lll 1,
		\end{gather}
	which is a stronger assertion.

	\item Lemma \ref{hessianlemma} still holds for $n=2$ (when integrating instead over $\R^2$). Applying Lemma \ref{hessianlemma} to the sum $\sum_3$ of \eqref{thirdsum}, it now only remains to bound
	\begin{gather}
		\int_{[1,2)} \sum\nolimits_3 \dfrac{1}{\|k\|^{3/2}\|l\|^{3/2}} \abs{\int_{\R^2}e^{2\pi i t\Phi_{k,l}(AN)}\psi_{k,l}(AN)\dd a}\dd \eta \leq \\
		\int_{-2}^{-1}
		\sum_{\substack{ 1\leq\abs {k_1},\abs{k_2},\abs{l_1},\abs{l_2}\leq \BOUND \\ \abs{k_1l_2+k_2l_1+2\gamma k_1 l_1}\geq 1 \\ k_1l_2-k_2l_1\neq 0 }}
		 \dfrac1{\abs{k_1 l_2-k_2 l_1} \abs{k_1l_2+k_2l_1+2\gamma k_1l_1}} \dd\gamma,
	\end{gather}
	but this is precisely the expression \eqref{vom} on page \pageref{vom}, which we have already bounded as part of the proof for $n=2$.
\end{itemize}

This completes the sketch of the proof for $n=2$.\qed

\p {Proof of Theorem \ref{alttheorem} and Corollary \ref{altcorollary}}  \label{proofoversl}
%In this section, we prove Theorem \ref{alttheorem}.
Denote by \[\ESL{f(X)}\defeq\int_{\SL_n(\R)/\SL_n(\Z)} f(X)\dmu{\mu_1}X\] the mean value of $f$ over the set of all lattices with unit determinant, where $\mu_1$ is the normalized Haar measure on $\SL_n(\R)/\SL_n(\Z)$.
We quote the mean value formulas of Siegel and Rogers (see \cite{siegelmean} and Theorem 4 in \cite{rogers}).

\begin{Theorem}[Siegel's mean value formula]
	Suppose that $n\geq 2$. Let $\rho:\R^n\to\R$ be an integrable function, and let $\Lambda\defeq X\Z^n$ for $X\in\SL_n(\R)$. Then
	\[\ESL{\sum_{\substack{u\in \Lambda} }\rho(u)}=\int_{\R^n}\rho(x) \dd x+\rho(0).\]
\end{Theorem}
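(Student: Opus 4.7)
The plan is to introduce the functional $T(\rho) := \ESL{\sum_{u \in \Lambda \setminus \{0\}} \rho(u)}$ on $\rho \in C_c(\R^n)$. Since $0 \in \Lambda$ always contributes $\rho(0)$ to the full lattice sum, the theorem is equivalent to the assertion $T(\rho) = \int_{\R^n}\rho(x)\,dx$, which is what I will aim for.

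First I would verify that $T$ is a well-defined positive linear functional that is $\SL_n(\R)$-invariant in the sense $T(\rho \circ g) = T(\rho)$ for every $g \in \SL_n(\R)$. This follows from the left-invariance of $\mu_1$: substituting $X \mapsto g\inv X$ inside the defining integral of $T(\rho \circ g)$ transforms $\rho(g \cdot Xu)$ into $\rho(Xu)$ without altering the measure. Moreover, because the sum skips $u=0$ and each $Xu$ with $u \neq 0$ is nonzero, cutting $\rho$ off near the origin by a shrinking bump $\phi_\e$ and applying dominated convergence shows that $T(\rho)$ depends only on $\rho|_{\R^n \setminus \{0\}}$. Now for $n \geq 2$, $\SL_n(\R)$ acts transitively on $\R^n \setminus \{0\}$; combining this with the standard fact that an invariant Radon measure on a transitive $G$-space is unique up to a positive scalar (and observing that Lebesgue measure restricted to $\R^n \setminus \{0\}$ is already $\SL_n(\R)$-invariant), I conclude that there exists a constant $c \geq 0$ with $T(\rho) = c \int_{\R^n} \rho(x)\,dx$ for all $\rho \in C_c(\R^n)$.

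The remaining step is to pin down $c = 1$, which depends on the normalization of $\mu_1$. The natural approach is an unfolding argument: write each $u \in \Z^n \setminus \{0\}$ uniquely as $u = mv$ with $m \geq 1$ and $v$ primitive. For $n \geq 2$, $\SL_n(\Z)$ acts transitively on primitive vectors with stabilizer $H = \{\smat{1 & b\T \\ 0 & M} : b \in \Z^{n-1}, M \in \SL_{n-1}(\Z)\}$ of $e_1$, and unfolding gives $T(\rho) = \sum_{m \geq 1} \int_{\SL_n(\R)/H} \rho(mXe_1)\,d\bar\mu(X)$. Unfolding further through the real stabilizer $\tilde H \supset H$, using the identification $\SL_n(\R)/\tilde H \cong \R^n \setminus \{0\}$ via $X\tilde H \mapsto Xe_1$, rewrites this in the form $\vol(\tilde H / H) \sum_{m \geq 1} m^{-n} \int_{\R^n}\rho(x)\,dx$, making the constant $c$ explicit.

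The main obstacle is the bookkeeping of constants in this last step. Once one recognizes $T = c \int$ from symmetry, showing $c = 1$ under the normalization $\mu_1(\SL_n(\R)/\SL_n(\Z)) = 1$ reduces to an inductive verification that $\vol(\tilde H / H) \cdot \zeta(n) = 1$, where $\tilde H \cong \R^{n-1} \rtimes \SL_{n-1}(\R)$ and $H \cong \Z^{n-1} \rtimes \SL_{n-1}(\Z)$, so the inductive hypothesis on the Haar normalization for $\SL_{n-1}$ enters. This quantitative check is the nontrivial content of Siegel's original computation; by contrast, the qualitative identification $T = c \int$ from invariance is essentially formal.
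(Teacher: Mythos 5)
The paper does not prove this theorem; it is quoted verbatim with a citation to Siegel's original paper \cite{siegelmean}, so there is no internal proof to compare against. Taken on its own terms, your outline follows a well-known route (essentially Weil's argument): $\SL_n(\R)$-invariance of the functional $T$, transitivity of $\SL_n(\R)$ on $\R^n\setminus\{0\}$ together with uniqueness of invariant Radon measures on a homogeneous space to get $T(\rho)=c\int\rho$, and then unfolding over primitive vectors to pin down $c$.

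Two things would have to be supplied before this counts as a proof. First, to invoke the Riesz representation theorem you must know that $T(\rho)<\infty$ for $\rho\in C_c(\R^n)$; this is not automatic, since a priori the inner sum $\sum_{u\in\Lambda\setminus\{0\}}\rho(u)$ could fail to be $\mu_1$-integrable, and establishing integrability typically requires input from reduction theory (Siegel sets), or must be extracted a posteriori from the unfolding identity using $\zeta(n)<\infty$ for $n\geq 2$. Second, you explicitly defer the computation $\vol(\widetilde H/H)\,\zeta(n)=1$ under the normalization $\mu_1(\SL_n(\R)/\SL_n(\Z))=1$, which requires the inductive identification of the Haar normalization on $\SL_{n-1}$; this is the actual quantitative content of Siegel's theorem, and until it is carried out the argument is a reduction to a nontrivial volume computation rather than a proof. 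The structural plan is sound, but both of the steps you wave at are substantial, and the second one in particular is where essentially all of the work lives.
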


\begin{Theorem}[Rogers's mean value formula]
	Suppose that $n\geq 3$. Let $\rho:\R^n\times\R^n\to\R$ be a non-negative Borel-measurable function, and let $\Lambda\defeq X\Z^n$ for $X\in\SL_n(\R)$. Then
	\begin{gather}
		\ESL{\sum_{\substack{u,v\in \Lambda } }\rho(u,v)}= 
		\iint_{\R^n\times\R^n}\rho(x,y) \dd x\dd y+\rho(0,0)+ \\
		2\sum_{q=1}^\infty\sum_{\substack{r\geq 1\\ \gcd(q,r)=1}}\dfrac1{q^n}\int_{\R^n}\left(\rho\left(x,\frac qr x\right)+\rho\left(\frac qr x,x\right)\right)\dd x.
	\end{gather}
\end{Theorem}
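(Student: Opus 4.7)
The approach follows the orbit decomposition pioneered by Rogers. Unfolding the mean value, one obtains
\[
\ESL{\sum_{u,v\in\Lambda}\rho(u,v)} = \int_{\mathcal F} \sum_{u,v\in\Z^n} \rho(Xu,Xv)\dmu{\mu_1}X,
\]
where $\mathcal F \subseteq \SL_n(\R)$ is a fundamental domain for $\SL_n(\Z)$. The plan is to partition $\Z^n \times \Z^n$ into orbits under the diagonal action of $\SL_n(\Z)$ and compute each orbit's contribution via orbit--stabilizer together with invariance of the Haar measure. The orbits fall into three categories: (i) the trivial orbit $\{(0,0)\}$, whose stabilizer is all of $\SL_n(\Z)$ and which contributes $\rho(0,0)$; (ii) rank-1 orbits consisting of nonzero pairs $(u,v)$ lying in a common rational line, which can be written as $(aw,bw)$ for a primitive $w\in\Z^n$ and $(a,b)\in\Z^2\setminus\{(0,0)\}$; and (iii) rank-2 orbits of linearly independent pairs, parametrized by $\SL_n(\Z)$-classes of rank-2 sublattices $M\subseteq\Z^n$.

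For the rank-2 contribution, I would exploit the transitivity of $\SL_n(\R)$ on ordered pairs of linearly independent vectors. Writing $X\in\SL_n(\R)$ in terms of its first two columns $(u,v)=(Xe_1,Xe_2)$ and applying the associated factorization of the Haar measure, each orbit integral identifies with an integral of $\rho(x,y)$ weighted by the Smith invariants of the corresponding sublattice. A M\"obius inversion across all sublattice types collapses the sublattice weights and produces the single main term $\iint_{\R^n\times\R^n}\rho(x,y)\,dx\,dy$.

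For the rank-1 contribution, I would invoke the primitive-vector mean value formula
\[
\ESL{\sum_{w\in\Z^n\text{ primitive}}f(Xw)} = \tfrac{1}{\zeta(n)}\int_{\R^n} f(y)\,dy,
\]
which follows from Siegel's formula combined with M\"obius inversion over the content of a lattice vector. This reduces the rank-1 contribution to $\tfrac{1}{\zeta(n)}\sum_{(a,b)}\int \rho(ay,by)\,dy$, summed over orbit representatives modulo the sign involution $(a,b,w)\mapsto(-a,-b,-w)$. Organizing $(a,b)$ by the rational slope $b/a=p/q$ in lowest terms and summing over the positive integer multiples $(a,b)=(qk,pk)$ yields a factor of $\zeta(n)$ that cancels the prefactor $1/\zeta(n)$, leaving the stated sum indexed by coprime pairs $(q,r)$, while the degenerate cases $a=0$ or $b=0$ account for the remaining boundary contributions.

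The main obstacle is the rank-2 step: one must simultaneously handle the $\SL_2(\Z)$-ambiguity in choosing a basis of each sublattice, carry out the change of variables from $X\in\SL_n(\R)$ to its first two columns while integrating over the stabilizer fiber $\{X\in\SL_n(\R): Xe_1=e_1, Xe_2=e_2\}$, and perform the M\"obius-type summation across sublattice classes so that all $\zeta$-factors cancel exactly and only $\iint\rho\,dx\,dy$ survives. Once that is settled, the rank-1 bookkeeping is elementary term-shuffling, and assembling (i)--(iii) yields the claimed formula.
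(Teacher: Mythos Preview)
The paper does not prove this theorem at all: it is quoted verbatim from Rogers \cite{rogers} (Theorem~4) and used as a black box in the proof of Theorem~\ref{alttheorem}. So there is no ``paper's own proof'' to compare against.

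Your outline is the correct strategy and is essentially Rogers's original argument: decompose $\Z^n\times\Z^n$ into $\SL_n(\Z)$-orbits according to the rank of the pair, and compute each orbit contribution by unfolding against a fundamental domain. That said, what you have written is a plan rather than a proof. You yourself flag the rank-$2$ step as the ``main obstacle'' and do not carry it out; this is precisely where all the work lies (the factorization of Haar measure over the stabilizer of $(e_1,e_2)$, which requires $n\geq 3$ so that the stabilizer has finite covolume, and the exact cancellation of the sublattice-index weights). Until that step is executed, the argument is incomplete. The rank-$1$ bookkeeping also needs more care than you indicate: the degenerate cases $a=0$ or $b=0$ are not ``boundary contributions'' to be absorbed elsewhere---they are genuine rank-$1$ orbits (pairs $(0,v)$ and $(u,0)$ with $u,v\neq 0$) and must be matched to specific terms in the stated sum, which as written runs over $q,r\geq 1$ and so does not obviously include a slope $0$ or $\infty$.
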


\begin{proof}[Proof of Theorem \ref{alttheorem}]
Taking $\rho(u)\defeq\chi_{t\Omega}(u)$ in Siegel's mean value formula, we obtain \[\ESL{N_X(t)}=\vol(t\Omega)+1,\] and taking $\rho(u,v)\defeq\chi_{t\Omega}(u)\chi_{t\Omega}(v)$ in Rogers's mean value formula, we obtain
\begin{gather}
	\ESL{N_X(t)^2}=\vol(t\Omega)^2+1+
	4\sum_{q=1}^\infty\sum_{\substack{r\geq 1\\ \gcd(q,r)=1}}\dfrac1{q^n}\int_{\R^n}\chi_{t\Omega}(x)\chi_{t\Omega}\left(\frac qr x\right) \dd x,
\end{gather}
so that
\begin{gather}
	\ESL{N_X(t)^2}-(\vol(t\Omega)^2+1)
	=  4\sum_{\substack{q,r\geq 1\\ \gcd(q,r)=1}}\dfrac1{(qr)^n}\int_{\R^n}\chi_{t\Omega}(qx)\chi_{t\Omega}(rx) \dd x =\\
	4\sum_{\substack{q,r\geq 1\\ \gcd(q,r)=1}}\dfrac1{(qr)^n}\vol\left(\dfrac t{\max(q,r)}\Omega\right)
	=  \sum_{\substack{q,r\geq 1\\ \gcd(q,r)=1}}\dfrac{4\vol(t\Omega)}{(qr)^n \max(q,r)^n} \eqdef c_n\vol(t \Omega),
\end{gather}
where $c_n\geq 4$ is a constant (which is clearly convergent for $n\geq 2$). Thus we have
\begin{gather}
	\ESL{E_X(t)^2} = \ESL{(N_X(t)-\vol(t\Omega))^2} = \\
	\ESL{N_X(t)^2}-2\vol(t\Omega)\ESL{N_X(t)}+\vol(t\Omega)^2 = \\
	c_n\vol(t \Omega)+1-2\vol(t\Omega)=1+(c_n-2)\vol(\Omega)t^n=\Theta(t^n),
\end{gather}
so $\sqrt{\ESL{\abs{E_X(t)}^2}}=\Theta(t^{n/2})$. This completes the proof of Theorem \ref{alttheorem}.
\end{proof}

%\comment{Nytt bevis:}

\begin{proof}[Proof of Corollary \ref{altcorollary}]
	We identify $\GL_n(\R)/\GL_n(\Z)$ with $\GL^+_n(\R)/\SL_n(\Z)$, where $\GL_n^+(\R)$ is the subset of $\GL_n(\R)$ consisting of matrices with positive determinant, and use the decomposition $\GL^+_n(\R)/\SL_n(\Z) = ( \SL_n(\R)/\SL_n(\Z) )\cdot \mathcal D$, where $\mathcal D=\{rI: r>0\}$ is the set of positive multiples of the identity matrix $I$. We identify the Haar measure on $\GL_n^+(\R)/\SL_n(\R)$ with the Haar measure $\mu$ on $\GL_n(\R)$, which is well-known to be bi-invariant. The Haar measure $dr/r$ on $\mathcal D$ is bi-invariant as well since $\mathcal D$ is commutative. Thus the modular functions on these topological groups are identically 1 (see \cite{knapp}). Consequently, Theorem 8.32 from \cite{knapp} implies that
	\begin{gather}
		\int_{\substack{ a \leq \abs{\det X} \leq b}} \abs{E_X(t)}^2 \dmu{\mu}X=
		%\int\limits_{\substack{\GL_n^+(\R)/\SL_n(\Z) \\ a \leq \abs{\det X} \leq b}} \abs{E_X(t)}^2 \dmu{\mu}X= \\
		\int_{\substack{rI\in\mathcal D \\ a \leq r^n \leq b}}
		\int_{\SL_n(\R)/\SL_n(\Z)} \abs{E_{rX}(t)}^2 %\dfrac{\Delta_{\mathcal D}(rI)}{\Delta_{\GL_n^+(\R)}(rX)}
		\dmu{\mu_1}X \dfrac{\dd r}{r}.
	\end{gather}
	We have $E_{rX}(t)=E_X(t/r)$ for any $r>0$, so the inner integral can be written as $\ESL{\abs{E_X(t/r)}^2}$. Using the bounds from Theorem \ref{alttheorem} on the inner integral, and bounding the outer integral trivially, we get
	\[
		\int_{L_{a,b}}\abs{E_X(t)}^2\dmu\mu X = \Theta(t^n). \qedhere
	\]
\end{proof}

\section* {Acknowledgements}
I would like to thank my advisor Pär Kurlberg for suggesting this problem to me and for all his help and encouragement.

\bibliographystyle{alpha}
\bibliography{bibs}{}
\end{document}